\documentclass[12pt]{article}
% packages
\usepackage[top=3cm, bottom=3cm, left=2cm, right=2cm]{geometry}
\usepackage[alphabetic,lite]{amsrefs}                                          % for bibliography
\usepackage{amssymb}                                                           % for math symbols
\usepackage{amsthm}                                                            % for math theorems
\usepackage{appendix}                                                          % for appendices
\usepackage{array}                                                             % for tabulation
\usepackage{bm}                                                                % for bold letters in math
\usepackage{colonequals}                                                       % for better ":="
\usepackage{color}                                                             % for colorful text
\usepackage{enumitem}                                                          % for enumerating
\usepackage{fullpage}                                                          % for setting margins
\usepackage{graphicx}                                                          % for inserting pictures
\usepackage{indentfirst}                                                       % for first line indent
\usepackage{mathrsfs}                                                          % for "mathscr"
\usepackage{mathtools}                                                         % for better "amsmath"
\usepackage{moreenum}                                                          % for enumerating by Greek letters
\usepackage{stmaryrd}                                                          % for (at least) maps from
\usepackage{verbatim}                                                          % for "verbatim"
\usepackage[all,cmtip]{xy}                                                     % for commutative diagrams

% hyper reference
\definecolor{tianred}{rgb}{0.57, 0.36, 0.51}                                   % nice red %0.79, 0.17, 0.57
\definecolor{tianblue}{rgb}{0.0, 0.22, 0.66}                                   % nice blue
\definecolor{tianpink}{rgb}{0.88, 0.56, 0.59}                                  % nice pink
\definecolor{tiangreen}{rgb}{0.24, 0.82, 0.44}                                 % nice green

\usepackage[colorlinks,
            linkcolor=tianred,
            anchorcolor=tiangreen,
            citecolor=tianblue,
            urlcolor=tianpink]{hyperref}                                       % reference
\usepackage{cleveref}                                                          % reference

\theoremstyle{plain}      \newtheorem{thm}{Theorem}[section]                   % 定理-英
\theoremstyle{plain}      \newtheorem{thme}[thm]{Th{\'e}or{\`e}me}             % 定理-法
\theoremstyle{plain}      \newtheorem{lemme}[thm]{Lemme}                       % 引理-法
\theoremstyle{plain}                         % 推论-法
\theoremstyle{plain}      \newtheorem{prop}[thm]{Proposition}                  % 命题-英=法
\theoremstyle{plain}                   % 猜想-英=法
\theoremstyle{definition} \newtheorem{rmke}[thm]{Remarque}                     % 注释-法
\theoremstyle{definition} \newtheorem{dfe}[thm]{D\'efinition}                  % 定义-法
\theoremstyle{definition} \newtheorem{ege}[thm]{Exemple}                       % 例子-法
\theoremstyle{definition}                  % 假设-法
\theoremstyle{definition}                       % 习题-法
\theoremstyle{definition}                  % 符号-英=法
\theoremstyle{definition}          % 构造-英=法

\crefname{thme}{Th\'eor\`eme}{Th\'eor\`emes}
\crefname{lemme}{Lemme}{Lemmes}
\crefname{ege}{Exemple}{Exemples}
\crefname{rmke}{Remarque}{Remarques}
\crefname{core}{Corollaire}{Corollaires}
\crefname{dfe}{D\'efinition}{D\'efinitions}
\crefname{prop}{Proposition}{Propositions}

%\numberwithin{equation}{section}

\newcommand{\benum}{\begin{enumerate}[label={{\upshape(\alph*)}}]}             % (a),(b),(c),etc
\newcommand{\benuma}{\begin{enumerate}[label={{\upshape(\arabic*)}}]}          % (1),(2),(3),etc
\newcommand{\benumr}{\begin{enumerate}[label={{\upshape(\roman*)}}]}           % (i),(ii),(iii),etc
\newcommand{\eenum}{\end{enumerate}}
\newcommand{\bitem}{\begin{itemize}}                                           % itemize
\newcommand{\eitem}{\end{itemize}}                                             % itemize

\newcommand{\ol}{\overline}                                                    % overline

                                                  % image
\DeclareMathOperator{\Nrd}{Nrd}                                                % Reduced Norm

%others
\DeclareMathOperator{\cd}{cd}                                                  % Cohomological dimension
\DeclareMathOperator{\scd}{scd}                                                % strict cohomological dimension
                                                % virtual cohomological dimension
\DeclareMathOperator{\SK}{SK}                                                  % reduced Whitehead group
\DeclareMathOperator{\USK}{USK}                                                % reduced unitrary Whitehead group
                                                % central simple algebra
\DeclareMathOperator{\GL}{\mathbf{GL}}                                         % general linear group
                                         % special linear group
                                              % diagonal

                                                % induced
\newcommand{\ce}{\colonequals}                                                 % colon + equal
\newcommand{\itm}{\item}

\newcommand{\car}{\mathrm{car}}

\newcommand{\newpara}{\noindent\refstepcounter{thm}{\bf(\thethm)\;}}  %%%%%%%%%%%%%%%% new paragraph

\begin{document}
\title{\textbf{Trivialit\'e des groupes de Whitehead r\'eduits avec applications \`a l'approximation faible et l'approximation forte}}
\author{Yong HU et Yisheng TIAN}
\date{}

\maketitle %\tableofcontents

\begin{flushright}

\emph{D\'edi\'e \`a Professeur Jean-Louis Colliot-Th\'el\`ene}\\

\emph{\`a l'occasion de son 75-i\`eme anniversaire}

\end{flushright}

\renewcommand{\abstractname}{{\bf R\'esum\'e}}

\begin{abstract}
Nous prouvons certains r\'esultats de trivialit\'e pour des groupes de Whitehead r\'eduits et groupes de Whitehead unitaires r\'eduits pour des alg\`ebres \`a division sur un corps de valuation discr\`ete hens\'elien dont le corps r\'esiduel a pour dimension cohomologique virtuelle ou s\'eparable $\le 2$. Ces r\'esultats sont appliqu\'es pour d\'emontrer l'approximation forte pour des groupes simplement connexes absolument presque simples isotropes de type A. Comme cas particulier, un tel groupe d\'efini sur le corps des fonctions d'une courbe non r\'eelle  $C/k$ v\'erifie l'approximation forte si le corps de base $k$ est un corps de nombres, un corps $p$-adique, $\mathbb{C}(\!(t)\!)$ ou un corps de fonctions \`a deux variables sur $\mathbb{R}$.
\end{abstract}

\renewcommand{\abstractname}{{\bf Abstract}}

\begin{abstract}
We prove some triviality results for reduced Whitehead groups and reduced unitary Whitehead groups for division algebras over a henselian discrete valuation field whose residue field has virtual cohomological dimension or separable dimension $\le 2$. These results are applied to show strong approximation for isotropic absolutely almost simple simply connected groups of type A. In particular, such a group defined over the function field of a nonreal curve $C/k$ satisfies strong approximation if the base field $k$ is a number field, a $p$-adic field, $\mathbb{C}(\!(t)\!)$ or a two-variable function field over $\mathbb{R}$.
\end{abstract}

MSC 2020: 11E57 19B99 20G35 16K20

%19B28 $K_1$ of group rings and orders
%19Bxx		Whitehead groups and $K_1$
%19B99  	None of the above, but in this section
%11E57  	Classical groups
%20G35  	Linear algebraic groups over adeles and other rings and schemes
% 16K20  	Finite-dimensional division rings

Mots cl\'es:
 groupe de Whitehead r\'eduit,
groupe de Whitehead unitaire,
alg\`ebre \`a division sur un corps hens\'elien,
approximation faible,
approximation forte,
groupes simplement connexes

\section{Introduction}

Soit $A$ une alg\`ebre simple centrale (de dimension finie) sur un corps $K$.
D\'esignons par $A^*$ le groupe des unit\'es de $A$.
Le \emph{groupe de Whitehead} $K_1(A)$ est un quotient de $A^*/[A^*,\,A^*]$,
o\`u $[A^*,\,A^*]$ est le sous-groupe des commutateurs de $A^*$,
et la norme r\'eduite de $A$ induit un homomorphisme bien d\'efini $\Nrd_A: K_1(A)\to K^*$
(voir par exemple \cite{GSz17}*{\S\,2.10}).
Le \emph{groupe de Whitehead r\'eduit} de $A$ est d\'efini par
\[
\SK_1(A)\ce \mathrm{Ker}\big(\Nrd_A\,:\;K_1(A)\longrightarrow K^*\big)\,.
\]
Si $B$ est une alg\`ebre simple centrale sur $K$ qui est Brauer \'equivalente \`a $A$,
alors on a
$\SK_1(A)\cong \SK_1(B)$ par \cite{GSz17}*{Lemmas\;2.10.5 and 2.10.8}.
\`A l'exception de l'alg\`ebre matricielle  $A=\mathrm{M}_2(\mathbb{F}_2)$,
o\`u $\mathbb{F}_2$ est le corps fini de $2$ \'el\'ements,
on a un isomorphisme (voir \cite{Draxl}*{\S\;20, Thm.\;4 (iii)})
\[
\SK_1(A)\cong\frac{\mathrm{SL}_1(A)}{[A^*\,,\,A^*]}\quad\text{ o\`u }\; \mathrm{SL}_1(A)\ce \mathrm{Ker}\big(\Nrd_A\,:\;A^*\to K^*\big)\,.
\]
Il est bien connu que l'\'etude de $\SK_1(A)$ peut \^etre r\'eduite au cas o\`u $A$ est une alg\`ebre simple centrale \`a division de degr\'e une puissance d'un nombre premier (\cite{Draxl}*{\S\;23, Lemma\;6}).

On \'ecrit $\SK_1(A)=1$ si le groupe $\SK_1(A)$ est trivial.
Un th\'eor\`eme de Wang \cite{WangShianghaw50AJM} dit que
$\SK_1(A)=1$ si $K$ est un corps de nombres ou l'indice (de Schur) $\mathrm{ind}(A)$ est sans facteurs carr\'es.
Pour chaque nombre premier $p$, soit $\cd_p(K)$ la $p$-dimension cohomologique du corps $K$ (voir \cite{SerreCG94}*{\S I.3.1}).
La \emph{$p$-dimension cohomologique virtuelle} $\mathrm{vcd}_p(K)$ est d\'efinie par $\mathrm{vcd}_p(K)\ce \mathrm{cd}_p(K(\sqrt{-1}))$.
Yanchevskii a prouv\'e que si la caract\'eristique $\car(K)$ ne divise pas $\mathrm{ind}(A)$ et
$\mathrm{vcd}_p(K)\le 2$ pour chaque nombre premier $p\,|\,\mathrm{ind}(A)$,
alors $\SK_1(A)=1$ (\cite{Yanchevskii74MatSb} et \cite{Yanchevskii04Whitehead}).
Un analogue dans le cas $\car(K)\,|\,\mathrm{ind}(A)$ a \'et\'e \'etabli dans \cite{GB07}.

\medskip

Soit $(K,\,v)$ un corps valu\'e hens\'elien \`a corps r\'esiduel $\ol{K}$ et groupe de valeurs  $\Gamma_K=v(K^*)$.
Soient $p$ un nombre primier et $r_p\ce \dim_{\mathbb{F}_p}(\Gamma_K/p\Gamma_K)$
(ici, on consid\`ere $\Gamma_K$ comme un groupe additif).
Soit $D$ une alg\`ebre simple centrale \`a division sur $K$ de degr\'e une puissance de $p$.
Soman \cite{Soman19} a prouv\'e que $\SK_1(D)=1$ si
$\car(\ol{K})\neq p$, $1\le r_p\le 3$  et $\cd_p(\ol{K})\le 3-r_p$.

Le premier objectif du pr\'esent article est d'\'etendre le th\'eor\`eme de Soman dans deux cas.
L'un est le cas o\`u $p=\car(\ol{K})$ (\cref{1.1.5}),
et l'autre est le cas o\`u $v$ est une valuation discr\`ete
et $\mathrm{vcd}_p(\ol{K})\le 2$ (\cref{1.1.8}).
En particulier, nous pouvons conclure que $\SK_1(A)=1$ pour toutes les alg\`ebres simples centrales $A$ sur des corps comme $\mathbb{Q}(\!(t)\!)$,
$\mathbb{R}(x,\,y)(\!(t)\!)$, etc.
Comme produit secondaire, nous obtenons \'egalement la trivialit\'e de $\SK_1(A)$ pour les alg\`ebres de biquaternions sur certains corps ``semi-globaux'' (\cref{thm2.6}).

\medskip

Soit $C$ une courbe affine irr\'eductible normale sur un corps $k$.
De la trivialit\'e des groupes de Whitehead r\'eduits,
on obtient des r\'esultats d'approximation pour un groupe absolument presque simple et simplement connexe de type A int\'erieur,
c'est-\`a-dire, un groupe alg\'ebrique de la forme $G=\mathbf{SL}_n(D)$,
o\`u $n\ge 1$ et $D$ est une alg\`ebre simple centrale \`a division sur le corps des fonction $k(C)$.
Avec quelques hypoth\`eses sur $D$ ou $k$,
on prouve dans le \cref{thm4.3} que
$G$ satisfait l'approximation faible sur $C$ et que si $n\ge 2$,
$G$ satisfait m\^eme l'approximation forte.
En particulier, si $n\ge 2$ et $k$ est un corps de caract\'eristique $0$ avec $\mathrm{vcd}(k)\le 2$
(par exemple, $k$ est une extension finie de $\mathbb{Q}$, $\mathbb{Q}_p$, $\mathbb{R}(\!(t)\!)$, $\mathbb{R}(x)$ ou $\mathbb{R}(x,\,y)$),
alors $G$ satisfait l'approximation forte sur $C$.

Dans un autre travail avec J.\,Liu \cite{HuLiuTian23}, on utilise l'arithm\'etique des formes quadratiques enti\`eres pour donner un exemple
o\`u le groupe $\mathbf{SL}_1(Q)$ associ\'e \`a une certaine alg\`ebre de quaternions $Q$ sur $k(C)$ ne satisfait pas l'approximation forte.

\

La partie restante de cet article concerne des r\'esultats de trivialit\'e
pour le groupe de Whitehead unitaire r\'eduit et des applications aux propri\'et\'es d'approximation
pour les groupes semi-simples simplement connexes de type A ext\'erieur.
Les principaux r\'esultats dans cette direction sont les Th\'eor\`emes\;\ref{2.3.7} et \ref{thm4.7}.

\

Certains de nos r\'esultats sur les groupes de Whitehead r\'eduits (unitaires) et l'approximation faible ont \'et\'e obtenus par V.\,Suresh ind\'ependamment.

\section{Les groupes de Whitehead r\'eduits}\label{sec2}

Dans cette section,
soit $(K,\,v)$ un corps valu\'e hens\'elien \`a corps r\'esiduel $\overline{K}$ et groupe de valeurs $\Gamma_K$.
Soient $p$ un nombre premier et $r_p=\dim_{\mathbb{F}_p}(\Gamma_K/p\Gamma_K)$.

\medskip

\newpara Nous rappelons quelques notations de \cite{Soman19}*{\S\;2}.
Soit $D$ une alg\`ebre simple centrale \`a division sur $K$.
La valuation $v$ s'\'etend uniquement \`a une valuation sur $D$.
On \'ecrit  $\ol{D}$ et $\Gamma_D$ pour l'alg\`ebre  \`a division r\'esiduelle et le groupe de valeurs de $D$ respectivement.
L'application canonique $\theta_D:\Gamma_D\to \mathrm{Aut}(Z(\ol{D})/\ol{K})$ est d\'efinie dans
\cite{TignolWadsworth15}*{\S\;1.1.1, p.~3},
o\`u $Z(\ol{D})$ est le centre de $\ol{D}$.
On sait que $\theta_D$ est surjective et que l'extension $Z(\ol{D})/\ol{K}$ est quasi-galoisienne (\cite{TignolWadsworth15}*{Prop.\;1.5}).

Suivant \cite{JW90}*{\S\;6},
on dit que $D$ est \emph{mod\'er\'ee} (``\emph{tame}'' en anglais) sur $K$ si $D$ est sans d\'efaut (c'est-\`a-dire $[D:K]=[\ol{D}:\ol{K}][\Gamma_D:\Gamma_K]$),
l'extension $Z(\ol{D})/\ol{K}$ est s\'eparable (donc abelien) et $\car(\ol{K})\nmid [\mathrm{Ker}(\theta_D):\Gamma_K]$.

Si $\car(\ol{K})\nmid\deg(D)$, alors $D$ est  mod\'er\'ee par \cite{TignolWadsworth15}*{Prop.\;4.9}.

Lorsque $D$ est mod\'er\'ee, le nombre
\[
\zeta\ce \frac{\deg(D)}{\deg(\ol{D})[Z(\ol{D}):\ol{K}]}
\]
est un entier. En effet,
\[
\zeta^2
=\frac{[D:K]}{[\ol{D}:Z(\ol{D})][Z(\ol{D}):\ol{K}]^2}
=\frac{[\Gamma_D:\Gamma_K]}{[Z(\ol{D}):\ol{K}]}
=[\mathrm{Ker}(\theta_D):\Gamma_K]\;\in\,\mathbb{N}\,.
\]Notons $\mu_{\zeta}(\ol{K})$  le groupe des  racines $\zeta$-i\`emes de l'unit\'e dans $\ol{K}$.

%Notons $\tilde{N}:\ol{D}\to\ol{K}$ la compos\'ee de la norme $N_{Z(\ol{D})/\ol{K}}$ pour l'extension $Z(\ol{D})/\ol{K}$ et la norme r\'eduite $\Nrd_{\ol{D}}:\ol{D}\to Z(\ol{D})$.
%Posons $C:=\tilde{N}(\ol{D}^*)\cap \mu_{\zeta}(\ol{K})$,
%Soit $v_D$ l'extension unique \`a $D$ de la valuation $v$ de $K$. Posons
%\[
%\begin{split}
%  U&:=\{x\in D^*\,|\,v_D(x)=0\}\,,\\
%  \mathrm{SL}^{v_D}(D)&:=\{x\in\mathrm{SL}_1(D)\,|\,\tilde{N}(\bar x)=1\}\,,\\
%  C&:=\tilde{N}(\ol{D}^*)\cap \mu_{\zeta}(\ol{K})\,,
%\end{split}
%\]
%o\`u $\mu_{\zeta}(\ol{K})$ d\'esigne le groupe des $\zeta$-i\`emes racines de l'unit\'e dans $\ol{K}$.

\medskip

On utilisera le lemme suivant, d\^u \`a Ershov; voir \cite{Ershov82MathSb}*{p.~68} ou \cite{Soman19}*{Thm.\;2.1}.

\begin{lemme}\label{2p2new}
Avec les notations ci-dessus,  soit $\ell=Z(\ol{D})$. Lorsque  $D$ est mod\'er\'ee, pour un certain sous-groupe $C\le\mu_{\zeta}(\ol{K})$ et deux groupes  convenables $G_1,G_2$, on a trois suites exactes comme suit:
\[
  \begin{split}
    \SK_1(\ol{D})\longrightarrow G_1 \longrightarrow  &\,\hat{H}^{-1}(\ell/\ol{K}\,,\,\Nrd_{\ol{D}}(\ol{D}^*)) \longrightarrow 1\,,\\
    1 \longrightarrow G_1 \longrightarrow &\,G_2 \longrightarrow C \longrightarrow 1\,,\\
     &\,G_2  \longrightarrow \SK_1(D) \longrightarrow 1\,.
  \end{split}
\](On \'ecrit simplement $\ell/\ol{K}$ au lieu du groupe de Galois $\mathrm{Gal}(\ell/\ol{K})$ dans la notation du groupe de cohomologie, par convention standard en cohomologie galoisienne.)
\end{lemme}

%\begin{lemme}\label{2p2new}
%Avec les notations ci-dessus,  soit $\ell=Z(\ol{D})$. Lorsque  $D$ est mod\'er\'ee, on a trois suites exactes comme suit:
%\[
%  \begin{split}
%    1 \longrightarrow \mathrm{SL}^{v_D}(D)/[U,D^*] \longrightarrow &\,\mathrm{SL}_1(D)/[U,D^*] \longrightarrow C \longrightarrow 1\,,\\
%     \SK_1(\ol{D})\longrightarrow \mathrm{SL}^{v_D}(D)/[U,D^*] \longrightarrow  &\,\hat{H}^{-1}(\ell/\ol{K}\,,\,\Nrd_{\ol{D}}(\ol{D}^*)) \longrightarrow 1\,,\\
%    1\longrightarrow [D^*,D^*]/[U,D^*]\longrightarrow &\,\mathrm{SL}_1(D)/[U,\,D^*]  \longrightarrow \SK_1(D) \longrightarrow 1\,.
%  \end{split}
%\](On \'ecrit simplement $\ell/\ol{K}$ au lieu du groupe de Galois $\mathrm{Gal}(\ell/\ol{K})$ dans la notation du groupe de cohomologie, par convention standard en cohomologie galoisienne.)
%\end{lemme}

Le r\'esultat suivant g\'en\'eralise \cite{Soman19}*{Thm.\;1.1} dans le cas $r_p=1$.

\begin{thme}\label{1.1.8}
Soit $D$ une alg\`ebre simple centrale \`a division sur $K$ de degr\'e une puissance d'un nombre premier $p$.
Supposons $\car(\ol{K})\neq p$, $r_p=1$ et
$\mathrm{vcd}_p(\ol{K})=\cd_p(\ol{K}(\sqrt{-1}))\le 2$.
Alors $\SK_1(D)=1$.
\end{thme}
\begin{proof}[D\'emonstration]
Rappelons qu'un corps $k$ est appel\'e \emph{(formellement) r\'eel} ou \emph{ordonnable} si $-1$ n'est pas une somme d'un nombre fini de carr\'es dans $k$.
On peut supposer que $p=2$ et $\ol{K}$ est r\'eel,
car sinon $\cd_p(\ol{K})=\mathrm{vcd}_p(\ol{K})$ d'apr\`es \cite{SerreCG94}*{\S\;II.4, Prop.\;10'}
et le r\'esultat est d\'ej\`a connu dans le th\'eor\`eme de Soman.

Soit $\ell=Z(\ol{D})$. Comme $D$ est mod\'er\'ee, l'extension $\ell/\ol{K}$ est galoisienne.  On a $\mathrm{vcd}_2(\ell)\le \mathrm{vcd}_2(\ol{K})\le 2$. Par \cite{Yanchevskii04Whitehead}*{Thm.\;3.1}, on voit que $\SK_1(\ol{D})=1$.

L'hypoth\`ese $r_p=1$ entra\^ine que le groupe $\Gamma_D/\Gamma_K$ est cyclique, d'apr\`es \cite{Soman19}*{Lemma\;4.2}. Ainsi, $\mathrm{Ker}(\theta_D)/\Gamma_K$ est cyclique aussi, et la surjectivit\'e de $\theta_D$ implique que l'extension $\ell/\ol{K}$ est cyclique. D'autre part, \cite{TignolWadsworth15}*{Prop.\;8.17 (iv)} nous dit que le groupe $\mathrm{Ker}(\theta_D)/\Gamma_K$ admet un accouplement altern\'e non-d\'eg\'en\'er\'e. Ce groupe, \'etant cyclique, doit alors \^etre trivial. Cela montre que $\zeta=1$. Le groupe $C\le \mu_{\zeta}(\ol{K})$ est donc trivial.

D'apr\`es le Lemme \ref{2p2new}, il suffit de montrer que $\hat{H}^{-1}(\ell/\ol{K}\,,\,\Nrd_{\ol{D}}(\ol{D}^*))=1$.

 Si $\ell$ n'est pas r\'eel, alors $\cd_2(\ell)=\mathrm{vcd}_2(\ell)\le 2$. Cela donne $\Nrd_{\ol{D}}(\ol{D}^*)=\ell^*$, par un th\'eor\`eme bien connu de Merkurjev--Suslin \cite{MerkurjevSuslin82English}. Ainsi, on obtient
 \[
 \hat{H}^{-1}(\ell/\ol{K},\,\Nrd_{\ol{D}}(\ol{D}^*))=\hat{H}^{-1}(\ell/\ol{K},\,\ell^*)=H^1(\ell/\ol{K},\,\ell^*)=1
  \]par le th\'eor\`eme $90$ de Hilbert. (Ici on utilise the fait que $\ell/\ol{K}$ est cyclique.)

Supposons maintenant $\ell$  r\'eel. Soit $S_0$ l'ensemble des ordres $i$ de $\ell$ tels que $\ol{D}$ n'est pas d\'eploy\'ee sur la cl\^oture r\'eelle de $\ell$ par rapport \`a $i$.
D'apr\`es \cite{BFP98}*{Thm.\;2.1}, on a
\[
\Nrd_{\ol{D}}(\ol{D}^*)=\ell^+\ce \{a\in \ell\,|\,i(a)>0 \text{ pour tout }\,i\in S_0\}\,.
\]
(Ici, on identifie un ordre avec un plongement dans un corps r\'eellement clos.)
On d\'efinit $S:=\{i\sigma\,|\,i\in S_0,\,\sigma\in \mathrm{Gal}(\ell/\ol{K})\}$.
Comme expliqu\'e dans \cite{Ershov82MathSb}*{p.~66}, $\Nrd_{\ol{D}}(\ol{D}^*)$ est stable sous l'action galoisienne de $\mathrm{Gal}(\ell/\ol{K})$.
Cela implique
\[
\Nrd_{\ol{D}}(\ol{D}^*)=\ell^+=\ell_S^+\ce \{a\in \ell\,|\,i(a)>0 \text{ pour tout }\,i\in S\}\,.
\]
Il reste \`a prouver $\hat{H}^{-1}(\ell/\ol{K}\,,\,\ell_S^+)=1$, que l'on fait dans le \cref{1.1.7} (3) ci-dessous.
\end{proof}

\begin{lemme}\label{1.1.7}
Soit $L/k$ une extension finie galoisienne de corps r\'eels avec $G=\mathrm{Gal}(L/k)$.
Soit $S$ un ensemble d'ordres de $L$ qui est stable sous l'action naturelle de $G$,
c'est-\`a-dire, $i\sigma\in S$ pour tout $i\in S$ et tout $\sigma\in G$.
Soient $H\lhd G$ un sous-groupe distingu\'e,
$E:=L^H$ le sous-corps fix\'e et $T$ l'ensemble des ordres de $E$ obtenus par restrictions des ordres dans $S$ \`a $E$.

\benuma
\item\label{1.1.7.1}
L'ensemble $S$ est pr\'ecis\'ement l'ensemble des ordres de $L$ qui \'etendent les ordres dans $T$,
et $T$ est stable sous l'action naturelle de $\mathrm{Gal}(E/k)=G/H$.

\item\label{1.1.7.2}
On d\'efinit $L_S^+\ce \{a\in L^*\,|\,i(a)>0 \text{ pour tout }\; i\in S\}$.

Alors $L_S^+$ est stable sous l'action naturelle de $G$ sur $L^*$, et
\[
(L_S^+)^H=E_{T}^+\ce \{x\in E^*\,|\,i(x)>0 \text{ pour tout }\; i\in T\}\,.
\]

\item\label{1.1.7.3}
Supposons que $[L:k]$ est une puissance de $2$.
On a $H^1(L/k,\,L_S^+)=1$.
\end{enumerate}
\end{lemme}
\begin{proof}[D\'emonstration]
\hfill
\benuma
\itm
La premi\`ere affirmation est un fait standard (voir par exemple \cite{Lam05}*{Chap.\;VIII, Appendix A,\,Cor.\;2.20}).
La deuxi\`eme est une cons\'equence de la d\'efinition.

\itm
Ceci est une cons\'equence de la stabilit\'e de $S$ sous l'action de $G$.

\itm
On peut choisir comme $H$ un sous-groupe d'indice 2 dans $G$.
Par \ref{1.1.7.2},
la suite exacte de restriction-inflation pour le $G$-module $L_S^+$ est
\[
1\longrightarrow H^1(E/k,\,E_T^+)\longrightarrow H^1(L/k,\,L_S^+)\longrightarrow H^1(L/E,\,L_S^+)\,.
\]
Par r\'ecurrence sur $[L:k]$, on se r\'eduit au cas $[L:k]=2$.
Selon l'isomorphisme $H^1(L/k,\,L_S^+)\cong \hat{H}^{-1}(L/k,\,L_S^+)$,
il suffit de montrer que pour tout $a\in L_S^+$ tel que $N_{L/k}(a)=1$,
il existe $b\in L_S^+$ tel que $a=b/\sigma(b)$,
o\`u $\sigma$ est l'\'el\'ement non trivial du groupe $G=\mathrm{Gal}(L/k)$.
Pour ce faire, observons d'abord qu'il existe un \'el\'ement $c\in L^*$ tel que $a=c/\sigma(c)$.
Comme $a\in L_S^+$, pour chaque $i\in S$ les \'el\'ements $c$ et $\sigma(c)$ ont le m\^eme signe par rapport \`a l'ordre $i$.
Donc $t\ce c+\sigma(c)\in k$ est du m\^eme signe que $c$ par rapport \`a chaque $i\in S$.
En prenant $b=tc$, on obtient $a=b/\sigma(b)$ et $b\in L_S^+$.
Ceci compl\`ete la d\'emonstration.
\qedhere
\end{enumerate}
\end{proof}

\begin{rmke}\label{remark2.4}
Sans aucune restriction sur $r_p$ et $D$,
on peut montrer que $\SK_1(D)=1$ si $\ol{K}=\mathbb{R}$ (\cite{TignolWadsworth15}*{Exercise\;11.1})
ou si $K=k(\!(t_1)\!)\cdots (\!(t_n)\!)$, o\`u $k$ est un corps de fonctions \`a une variable sur $\mathbb{R}$ (\cite{Lipnitskii82}).

Si $D$ est une alg\`ebre simple centrale \`a division non ramifi\'ee
(ou ``inertial'' en terminologie de \cite{JW90}*{p.~138} et \cite{TignolWadsworth15}*{Chap.\;8}) sur $K$,
on a $\SK_1(D)\cong\SK_1(\ol{D})$ par \cite{TignolWadsworth15}*{Thm.\;11.23 (i)}.
Dans ce cas, on a $\SK_1(D)=1$ si $\SK_1(\ol{D})=1$
(par exemple si $\ol{K}=\mathbb{Q}(\!(t)\!)$, par le \cref{1.1.8}).
\end{rmke}

\begin{rmke}\label{remark2.5new}
Soit $D$ une alg\`ebre simple centrale \`a division  sur un corps $F$ de caract\'eristique $\neq 2$.
On sait que $\SK_1(D)=1$ dans chacun des cas suivants :

\benuma
\item\label{remark2.5new.1}
$\deg(D)=4$ et $\mathrm{cd}_2(F)\le 3$.

\item\label{remark2.5new.2}
$D$ est une alg\`ebre de biquaternions et $F$ est le corps des fonctions d'une courbe sur un corps de nombres.
\end{enumerate}

Le cas \ref{remark2.5new.1} r\'esulte de \cite{Merkurjev99crelle}*{Thm.\;6.6} ou \cite{Suslin06SK1}*{p.~134, Thm.\;3}.
Le cas \ref{remark2.5new.2} a \'et\'e prouv\'e dans \cite{Colliot96Ktheory}*{Thm.\;3.1}.
\end{rmke}

Nous observons le r\'esultat suivant du m\^eme type que la \cref{remark2.5new} \ref{remark2.5new.2}.

\begin{thme}\label{thm2.6}
Soit $k$ un corps r\'eel tel que $\mathrm{vcd}_2(k)\le 1$
(par exemple $k=\mathbb{R}(t)$ ou $\mathbb{R}(\!(t)\!)$).
Soit $F$ une extension finie de $k(\!(x,\,y)\!)$ ou de $k(\!(x)\!)(y)$.

Alors $\SK_1(A)=1$ pour chaque alg\`ebre de biquaternions  $A$ sur $F$.
\end{thme}
\begin{proof}[D\'emonstration]
Soit $\Omega$ l'ensemble $\mathcal{X}^{(1)}$ d\'efini comme dans \cite{Hu2017IMRN}*{Thm.\;1.2}.
Pour chaque valuation discr\`ete $v\in\Omega$,
son corps r\'esiduel $\kappa(v)$ est isomorphe \`a une extension finie de $k(\!(x)\!)$ ou de $k(x)$.
On a donc $\mathrm{vcd}_2(\kappa(v))\le 2$.
Par le \cref{1.1.8}, sur le compl\'et\'e $F_v$ l'alg\`ebre $A_v\ce A\otimes_FF_v$ satisfait $\SK_1(A_v)=1$ pour toute $v\in\Omega$.
Maintenant le r\'esultat est une cons\'equence de \cite{Hu2017IMRN}*{Thm.\;1.2} et du diagramme commutatif
\[
\xymatrix{
\SK_1(A) \ar[d]\ar[rr] && \prod_{v\in\Omega}\SK_1(A_v) \ar[d] \\
H^4(F,\,\mathbb{Z}/2) \ar[rr] && \prod_{v\in\Omega} H^4(F_v,\,\mathbb{Z}/2)
}
\]
o\`u les fl\`eches verticales sont les invariants de Rost (qui sont fonctoriels et injectifs, voir \cite{Merkurjev95ProcSympos58}*{Thm.\;4}).
\end{proof}

\newpara\label{para2.4new}
Pour tout corps $k$, la \emph{$p$-dimension s\'eparable} $\scd_p(k)$ est d\'efinie dans \cite{Gille00}*{p.~62}.
Si $\car(k)\neq p$, alors $\scd_p(k)=\cd_p(k)$.
En utilisant la d\'efinition et la th\'eorie de Galois, on peut montrer que $\scd_p(k)=0$ si et seulement si
pour toute extension s\'eparable finie $\ell/k$,
le corps $\ell$ n'a pas d'extension s\'eparable de degr\'e une puissance de $p$.
D'apr\`es \cite{Gille19book}*{Prop.\;4.6.1 et Thm.\;4.7.1}, on a :

\begin{itemize}
\item
$\scd_p(k)\le 1$ si et seulement si pour toute extension s\'eparable finie $\ell/k$,
l'application de norme $N_{E/\ell}: E^*\to \ell^*$ est surjective pour toute extension s\'eparable finie $E/\ell$.

\item
$\scd_p(k)\le 2$ si et seulement si pour toute extension s\'eparable finie $\ell/k$,
l'application de norme r\'eduite $\Nrd_{B/\ell}: B^*\to \ell^*$ est surjective
pour chaque alg\`ebre simple centrale $B$ sur $\ell$ d'indice une puissance de $p$.
\end{itemize}
%
%Rappelons que $D$ est dite \emph{totalement ramifi\'ee} si
%
%\medskip
%
%Le lemme suivant est un analogue du Lemme 4.3 dans \cite{Soman19}.
%
%
%\begin{lemme}\label{2p9new}
%Supposon $\car(\ol{K})=p$ et $2\le r_p\le 3$. Soit $T$ une alg\`ebre simple centrale  \`a division mod\'er\'ee et totalement ramifi\'ee de degr\'e une puissance de $p$.
%
%Alors $T$ est une alg\`ebre de symbole et $\SK_1(T)=1$.
%\end{lemme}
%\begin{proof}[D\'emonstration]
%Soit $\deg(T)=p^t$.
%\end{proof}

Maintenant, on \'etend \cite{Soman19}*{Thm.\;1.1} au cas $p=\car(\ol{K})$.

\begin{thme}\label{1.1.5}
Soit $D$ une alg\`ebre simple centrale  \`a division {\bf mod\'er\'ee} de degr\'e une puissance de $p$.
Supposons $\car(\ol{K})=p$, $1\le r_p\le 3$ et $\mathrm{scd}_p(\ol{K})\le 3-r_p$.
Alors $\SK_1(D)=1$.
\end{thme}

\begin{proof}[D\'emonstration]
Les id\'ees cruciales sont similaires \`a  celles dans la preuve du th\'eor\`eme de Soman.
 Remarquons d'abord que comme $\zeta$ est une puissance de $p$ et $\car(\ol{K})=p$, le groupe  $\mu_{\zeta}(\ol{K})$ et son sous-groupe $C$ sont nuls.
(Gr\^ace \`a ce fait, notre d\'emonstration devient plus courte que celle de Soman.)

 Par la vertu des trois suites exactes du Lemme \ref{2p2new}, il reste \`a d\'emontrer que les groupes $\SK_1(\ol{D})$ et $\hat{H}^{-1}(\ell/\ol{K},\,\Nrd_{\ol{D}}(\ol{D}^*))$ s'annulent, o\`u $\ell=Z(\ol{D})$.

\bitem
\itm
Le cas $r_p=1$. Par l'hypoth\`ese, on a $\scd_p(\ol{K})\le 2$. Ainsi $\scd_p(\ell)\le\scd_p(\ol{K})\le 2$. On obtient $\Nrd_{\ol{D}}(\ol{D}^*)=\ell^*$ par \eqref{para2.4new},
et $\SK_1(\ol{D})=1$ \`a l'aide de \cite{GB07}*{Cor.\;5.5}. Comme on a vu dans la preuve du Th\'eor\`eme \ref{1.1.8}, $\ell/\ol{K}$ est une extension cyclique, et donc la trivialit\'e de $\hat{H}^{-1}(\ell/\ol{K},\,\ell^*)$ r\'esulte du th\'eor\`eme 90 de Hilbert.

\itm
Le cas $2\le r_p\le 3$. Dans ce cas, on a $\scd_p(\ell)\le \scd_p(\ol{K})\le 1$. Cela signifie que le groupe de Brauer de $\ell$ n'a pas de $p$-torsion.  Il s'ensuit que   $\ol{D}=\ell$ et donc $\SK_1(\ol{D})=1$. De plus, en tant que $\mathrm{Gal}(\ell/\ol{K})$-module, $\ell^*$ est cohomologiquement trivial par \cite{SerreCL}*{IX.5, Thm.\;8}. Donc,
 $\hat{H}^{-1}(\ell/\ol{K},\,\Nrd_{\ol{D}}(\ol{D}^*))=\hat{H}^{-1}(\ell/\ol{K},\,\ell^*)=1$.
\qedhere
\eitem
\end{proof}

\section{Les groupes de Whitehead unitaires r\'eduits}

Dans cette section, soit $K/F$ une extension quadratique s\'eparable de corps.

\medskip

\newpara\label{1.2.1}
Soit $A$ une alg\`ebre simple centrale sur $K$ qui admet une $K/F$-involution unitaire $\sigma$
(c'est-\`a-dire une involution de  seconde esp\`ece telle que $F$ est le sous-corps des \'el\'ements fix\'es par $\sigma$ dans $K$).
On d\'efinit les groupes
\[
\begin{split}
\Sigma_{\sigma}(A):&=\text{ le groupe engendr\'e par tous les \'el\'ements } a\in A^* \text{ tels que } \sigma(a)=a \,,\\
\Sigma'_{\sigma}(A):&=\left\{a\in A^*\,|\,\Nrd_{A/K}(a)=\sigma\big(\Nrd_{A/K}(a)\big)\right\}\,.
\end{split}
\]
Si $\tau$ est une autre $K/F$-involution sur $A$,
il existe $u\in A^*$ tel que $\sigma(u)=u=\tau(u)$ et $\tau=\mathrm{Int}(u)\circ\sigma$,
o\`u $\mathrm{Int}(u)$ est l'automorphisme int\'erieur $x\mapsto uxu^{-1}$ (\cite{KMRT98}*{(2.18)}).
Il s'ensuit que
$\Sigma_{\sigma}(A)=\Sigma_{\tau}(A)$ et
$\Sigma'_{\sigma}(A)=\Sigma'_{\tau}(A)$
(\cite{Yanchevskii74MatSb}*{Lemma\;1}).

Le \emph{groupe de Whitehead unitaire r\'eduit} de $A/F$ est d\'efini par
\[
\USK_1(A/F)\ce \USK_1(\sigma,A)\ce \Sigma'_{\sigma}(A)/\Sigma_{\sigma}(A)\,,
\]
o\`u $\sigma$ est une $K/F$-involution unitaire quelconque sur $A$.

\begin{rmke}\label{remark on USK1 and SK1}
Les faits suivants r\'ev\`elent certaines analogies entre les groupes $\USK_1(A/F)$ et $\SK_1(A)$ :

\benuma
\item
Si $A=A_1\otimes_KA_2$,
o\`u $A_1$ et $A_2$ sont des alg\`ebres simples centrales sur $K$ dont les indices sont premi\`eres entre eux, alors
$\USK_1(A/F)\cong\USK_1(A_1/F)\times\USK_1(A_2/F)$ (\cite{Yanchevskii78}*{Prop.\;2.7}).

\item
Si $D$ est l'alg\`ebre simple centrale \`a division sur $K$  Brauer \'equivalent \`a $A$,
alors on a $\USK_1(A/F)\cong \USK_1(D/F)$ par \cite{Yanchevskii74MatSb}*{Lemma\;3}.

\item
On a $\USK_1(A/F)=1$ dans chacun des cas suivants :

\benumr
\item
$\mathrm{ind}(A)$ est sans facteurs carr\'es (\cite{Yanchevskii78}*{Lemma\;2.8}).

\item
$K$ est un corps global (\cite{PlatonovYanchevskii73}).

\item
Pour chaque diviseur premier $p$ de $\mathrm{ind}(A)$,
$\scd_p(K)\le 2$ (\cite{Yanchevskii74MatSb}*{Thm.\;1} et \cite{GB07}*{Thm.\;1.1 and Cor.\;5.5}).

\item $\car(k)=0$ et $\mathrm{vcd}(k)\le 2$ (\cite{Yanchevskii04Whitehead}*{Thm.\;3.1}).
\qedhere
\end{enumerate}
\end{enumerate}
\end{rmke}
En particulier,
on peut se limiter \`a consid\'erer les alg\`ebres \`a division de degr\'e une puissance d'un nombre premier lors de l'\'etude des groupes de Whitehead unitaires r\'eduits.

\begin{thme}[{\cite{Yanchevskii78}*{Cor.\;4.10, 4.13 and 4.14}}]\label{2.3.5}
Soit $K$ un corps de valuation discr\`ete hens\'elien et
supposons que l'extension $\ol{K}/\ol{F}$ est s\'eparable.
Soit $D$ une alg\`ebre simple centrale \`a division {\bf mod\'er\'ee}  sur $K$ admettant une  $K/F$-involution.

\benuma
\item Si $D/K$ et $K/F$ sont   non ramifi\'ees,  on a $\USK_1(D/F)\cong\USK_1(\ol{D}/\ol{F})$.

\item Si  $K/F$ est totalement ramifi\'ee, ou si $D/K$ est ramifi\'ee et  $\ol{D}$ est commutative, alors $\USK_1(D/F)=1$.
\end{enumerate}
\end{thme}

\newpara\label{3p4new} Supposons que  $K/F$ est une extension quadratique non ramifi\'ee de corps de valuation discr\`ete hens\'eliens avec l'extension de corps r\'esiduels $\ol{K}/\ol{F}$ (s\'eparable). Soit $D$ une $K$-alg\`ebre simple centrale \`a division admettant une  $K/F$-involution. Supposons que $D/K$ est mod\'er\'ee  et ramifi\'ee.

Comme l'alg\`ebre r\'esiduelle $\ol{D}$ est s\'eparable sur $\ol{K}$, il existe une sous-alg\`ebre $E\subseteq D$ non ramifi\'ee sur $K$ telle que $\ol{E}=\ol{D}$ (\cite{JW90}*{Thm.\;2.9}). On choisit une telle sous-alg\`ebre $E\subseteq D$. D'apr\`es \cite{Yanchevskii78}*{Prop.\;3.17}, on peut choisir  une $K/F$-involution convenable $\tau$ sur $D$ et une certaine uniformisante $\Pi$ de $D$ telles que $\tau(E)=E$, $\tau(\Pi)=\Pi$ et $\mathrm{Int}(\Pi)(E)=E$, o\`u $\mathrm{Int}(\Pi): D\to D$ d\'esigne l'automorphisme int\'erieur $x\mapsto \Pi x\Pi^{-1}$. Posons $\tau_2=\mathrm{Int}(\Pi)\circ\tau$.

La restriction $\tau|_K$ est l'automorphisme non trivial de $K/F$. Comme $K/F$ est non ramifi\'ee, la r\'eduction $\ol{\tau}$ de $\tau$ est non triviale sur $\ol{K}$. Ainsi, $\ol{\tau}$ est une involution unitaire sur $\ol{D}$. Comme $\tau_2|_K=\tau|_K$, la r\'eduction $\ol{\tau}_2$ est aussi une involution unitaire sur $\ol{D}$.

Comme dans \cite{Yanchevskii78}*{4.11}, on dit qu'un \'el\'ement
\[
\tilde{a}\;\in\; \frac{Z(\ol{D})^*}{\ol{K}^*\Nrd_{\ol{D}/Z(\ol{D})}\big(\Sigma_{\ol{\tau}}(\ol{D})\big)}
\]est une {\bf conorme unitaire projective} s'il existe un r\'epresentant $a\in Z(\ol{D})^*$ de $\tilde{a}$ et un \'el\'ement $b\in\Nrd_{\ol{D}}(\ol{D}^*)$ tels que
$\sigma(a)/a=b/\ol{\tau}_2(b)$, o\`u $\sigma$ d\'esigne le g\'en\'erateur du groupe cyclique $\mathrm{Gal}(Z(\ol{D})/\ol{K})$ obtenue par restriction de $\ol{\mathrm{Int}(\Pi)}$.

Les conormes unitaires projectives forment un groupe, que l'on note $\mathrm{PU}(\tau,\,D)$.

\begin{prop}[{\cite{Yanchevskii78}*{Thm.\;4.12}}]\label{3p5new}
  Avec les notations et les hypoth\`eses du paragraphe \eqref{3p4new}, %pour un certain quotient $Q$ du groupe $\USK_1(\ol{\tau}_2,\,\ol{D})$ on a une suite exacte
  il existe une suite exacte
 % \[
%   1\longrightarrow Q\longrightarrow \USK_1(\tau,\,D)\longrightarrow \mathrm{PU}(\tau,\,D)\longrightarrow 1\,.
%  \]
  \[
   \USK_1(\ol{\tau}_2,\,\ol{D})\longrightarrow \USK_1(\tau,\,D)\longrightarrow \mathrm{PU}(\tau,\,D)\longrightarrow 1\,.
  \]
\end{prop}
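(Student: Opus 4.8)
The plan is to read the asserted sequence as the reduction sequence attached to the valuation on $D$: the right-hand term records the residue of the reduced norm, while the left-hand term records the residue division algebra itself. Before anything else I would invoke twice the involution-independence recalled in \eqref{1.2.1}. On $D$ it gives $\USK_1(\tau,D)=\USK_1(\tau_2,D)$, so I may compute with whichever of $\tau,\tau_2$ is convenient; on $\ol D$ it gives $\Sigma_{\ol\tau}(\ol D)=\Sigma_{\ol\tau_2}(\ol D)$, so the denominator appearing in $\mathrm{PU}(\tau,D)$ is insensitive to the choice between $\ol\tau$ and $\ol\tau_2$. I would then use that $\Pi$ is $\tau$-symmetric, hence $\Pi\in\Sigma_\tau(D)$: clearing the $\Pi$-power shows that every class in $\USK_1(\tau,D)=\Sigma'_\tau(D)/\Sigma_\tau(D)$ has a representative that is a unit $u$ of the valuation ring $\mathcal{O}_D$. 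I then set
\[
\phi\colon \USK_1(\tau,D)\longrightarrow \mathrm{PU}(\tau,D),\qquad [u]\longmapsto \text{the class of } \Nrd_{\ol D/Z(\ol D)}(\ol u),
\]
where $\ol u\in\ol D^*$ is the residue of $u$.

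To see that $\phi$ is well defined and lands among the projective unitary conorms I would argue as follows. The condition $u\in\Sigma'_\tau(D)$ says $\Nrd_{D/K}(u)$ is $\tau$-fixed, i.e. $\Nrd_{D/K}(u)\in \mathcal{O}_F^*$; since $K/F$ is unramified, its residue lies in $\ol F^*$ and is therefore fixed by $\ol\tau|_{\ol K}$. Reducing the tame norm formula $\ol{\Nrd_{D/K}(u)}=N_{Z(\ol D)/\ol K}\bigl(\Nrd_{\ol D/Z(\ol D)}(\ol u)\bigr)^{\zeta}$ and feeding the $\ol\tau$-invariance through Hilbert $90$ for the cyclic extension $Z(\ol D)/\ol K$ (whose Galois group is generated by $\sigma=\ol{\mathrm{Int}(\Pi)}$) converts the symmetry of $\Nrd_{D/K}(u)$ into precisely the conorm relation $\sigma(a)/a=b/\ol\tau_2(b)$ with $a=\Nrd_{\ol D/Z(\ol D)}(\ol u)$; the twist of $\tau$ by $\mathrm{Int}(\Pi)$ is exactly why $\ol\tau_2$, and not $\ol\tau$, governs this relation. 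Changing the unit representative by a $\tau$-symmetric unit or by $\Pi$ alters $a$ only modulo $\ol K^*\Nrd_{\ol D/Z(\ol D)}(\Sigma_{\ol\tau}(\ol D))$, so $\phi$ is well defined. For surjectivity I would start from a conorm $\tilde a$, choose a representative $a$ and an element $\ol u\in\ol D^*$ with $\Nrd_{\ol D/Z(\ol D)}(\ol u)=a$, lift $\ol u$ through the inertial isomorphism $\mathcal{O}_E^*\twoheadrightarrow\ol D^*$, and use the henselian property to correct the lift by a symmetric factor so that it lies in $\Sigma'_\tau(D)$ and maps to $\tilde a$.

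It remains to produce the left-hand map and to prove exactness in the middle, which I expect to be the main obstacle. Since $\tau_2$ stabilises $E$ (because $\tau$ and $\mathrm{Int}(\Pi)$ do) and reduces on $\ol E=\ol D$ to $\ol\tau_2$, lifting $\ol\tau_2$-unitary units of $\ol D$ through $E$ defines a map $\USK_1(\ol\tau_2,\ol D)\to\USK_1(\tau_2,D)=\USK_1(\tau,D)$, which is why the residue term carries the involution $\ol\tau_2$; symmetric elements lift to symmetric elements, so the map descends to $\USK_1$. The delicate point is the converse inclusion, that $\ker\phi$ is contained in the image of this left-hand map: given a unit $u$ with $\Nrd_{\ol D/Z(\ol D)}(\ol u)\in\ol K^*\Nrd_{\ol D/Z(\ol D)}(\Sigma_{\ol\tau}(\ol D))$, I must show $[u]$ is the image of an $\ol\tau_2$-unitary class from $\ol D$. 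I would run a successive-approximation argument along the $\Pi$-adic filtration: using that $\Pi$ together with the symmetric units generate enough of $\Sigma_\tau(D)$, one removes, modulo $\Sigma_\tau(D)$, the higher $\Pi$-adic terms of $u$ and reduces it to an inertial lift of a genuine $\ol\tau_2$-unitary element of $\ol D$, the henselian hypothesis guaranteeing convergence of the approximation. Keeping straight which symmetric subgroup controls each step ($\Sigma_{\ol\tau}$ for the target versus $\Sigma_{\ol\tau_2}$ for the source, equal though they are) and controlling $\Sigma_\tau(D)$ through the filtration is where the real work lies.
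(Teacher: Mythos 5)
First, a point of comparison: the paper does not prove \cref{3p5new} at all — it is quoted verbatim from \cite{Yanchevskii78}*{Thm.\;4.12} — so your attempt can only be measured against Yanchevskii's original argument. Your overall architecture does match his: since $\tau(\Pi)=\Pi$, every class in $\USK_1(\tau,D)$ has a unit representative; the right-hand map is reduction followed by $\Nrd_{\ol{D}/Z(\ol{D})}$; and the left-hand map lifts residue classes through the $\tau_2$-stable inertial subalgebra $E$, which is exactly why $\ol{\tau}_2$ governs the first term. But two of the steps you yourself flag as ``the real work'' are genuine gaps, not deferred technicalities.

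The most serious one is middle exactness. Your successive-approximation argument along the $\Pi$-adic filtration cannot be closed by ``henselian convergence'': each correction step multiplies the representative by an element of $\Sigma_\tau(D)$, and after infinitely many steps the accumulated factor converges in the valuation topology of $D$, but $\Sigma_\tau(D)$ is an abstract subgroup generated by symmetric elements, with no closedness or completeness property, so the limit factor has no reason to lie in it. What makes the induction terminate in Yanchevskii's proof is the unitary congruence theorem — the analogue of Platonov's congruence theorem — asserting that $(1+\mathfrak{m}_D)\cap\Sigma'_\tau(D)\subseteq\Sigma_\tau(D)$ for tame $D$; this is a deep result (it is where tameness is used essentially), and it is precisely the missing idea your sketch papers over. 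The second gap concerns well-definedness and the fact that your map $\phi$ lands in $\mathrm{PU}(\tau,D)$. ``Feeding the $\ol{\tau}$-invariance through Hilbert 90'' is not a proof: the tame norm formula only constrains $N_{Z(\ol{D})/\ol{K}}(a)^{\zeta}$, a condition on the norm of $a$ down in $\ol{K}$, and this does not formally yield the relation $\sigma(a)/a=b/\ol{\tau}_2(b)$, which involves the specific generator $\sigma=\ol{\mathrm{Int}(\Pi)}$ and the twisted involution $\ol{\tau}_2$ and must be produced by an explicit computation with $\Pi$ (comparing $\ol{\Pi u\Pi^{-1}}$ with $\ol{\tau_2(u)}$, which is also what exhibits the element $b$). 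Moreover, two unit representatives of the same class differ by a \emph{unit belonging to} $\Sigma_\tau(D)$, but such a unit need not be a product of symmetric \emph{units} (the symmetric generators can have arbitrary valuations), so even the independence of $\phi$ from the chosen unit representative requires structural lemmas on $\Sigma_\tau(D)$ relative to the valuation, again supplied by Yanchevskii and not by the manipulations you give.
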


Le th\'eor\`eme suivant est essentiellement une r\'ep\'etition de \cite{Yanchevskii78}*{Cor.\;4.15}.

\begin{thme}\label{3p6new}
 Soit $K$ un corps de valuation discr\`ete hens\'elien et supposons que l'extension $\ol{K}/\ol{F}$ est s\'eparable.
Soit $D$ une alg\`ebre simple centrale \`a division {\bf mod\'er\'ee}  sur $K$ admettant une  $K/F$-involution.

Si $\scd_p(\ol{K})\le 2$ pour chaque diviseur premier $p$ de $\deg(D)$, alors $\USK_1(D/F)=1$.
\end{thme}
\begin{proof}[D\'emonstration]
La preuve suit les m\^emes lignes que celles dans la preuve de Yanchevskii.  En effet, si $D/K$ et $K/F$ sont non ramifi\'ees, l'hypoth\`ese sur la dimension s\'eparable implique que $\USK_1(\ol{D}/\ol{F})=1$, par la Remarque \ref{remark on USK1 and SK1} (3)(iii). Vu le Th\'eor\`eme \ref{2.3.5}, on peut supposer que $\ol{D}$ est non commutative et que $K/F$ est non ramifi\'ee.

 Prenons les notations du paragraphe \eqref{3p4new}. On a $\USK_1(\ol{\tau}_2,\,\ol{D})=1$, par la Remarque \ref{remark on USK1 and SK1} (3)(iii). Selon la Proposition \ref{3p5new}, il suffit de montrer que $\mathrm{PU}(\tau,\,D)=1$.

 Soient $\ell=Z(\ol{D})$ et $\ell_0\subseteq \ell$ le sous-corps fix\'e par $\ol{\tau}$.  La Remarque \ref{remark on USK1 and SK1} (3)(iii) nous garanti aussi que
  $\Sigma_{\ol{\tau}}(\ol{D})=\Sigma'_{\ol{\tau}}(\ol{D})$. En outre, $\Nrd_{\ol{D}}(\Sigma'_{\ol{\tau}}(\ol{D}))=\ell_0^*$ puisque   $\Nrd_{\ol{D}}(\ol{D}^*)=\ell^*$ par \eqref{para2.4new}. D'o\`u
  \[
  \frac{Z(\ol{D})^*}{\ol{K}^*\Nrd_{\ol{D}/Z(\ol{D})}\big(\Sigma_{\ol{\tau}}(\ol{D})\big)}=\frac{\ell^*}{\ol{K}^*\ell_0^*}\;.
  \]Soit $a\in\ell^*$ un \'el\'ement tel que  $\sigma(a)/a=b/\ol{\tau}_2(b)$ pour un certain $b\in\ell^*=\Nrd_{\ol{D}}(\ol{D}^*)$. Notre but est de d\'emontrer que $a\in \ol{K}^*\ell_0^*$.

En fait, on a
  \[
  \ol{\tau}_2\sigma(a)\ol{\tau}_2(a)^{-1}\sigma(a)a^{-1}=\ol{\tau}_2(\sigma(a)a^{-1})\cdot (\sigma(a)a^{-1})=\ol{\tau}_2(b\ol{\tau}_2(b)^{-1})\cdot b\ol{\tau}_2(b)^{-1}=1\,.
  \]Par d\'efinition, $\ol{\tau}_2\sigma=\ol{\tau}$ et $\ol{\tau}_2=\sigma\ol{\tau}$. On obtient ainsi
  \[
  \sigma\big(a\ol{\tau}(a)^{-1}\big)\cdot\big(a\ol{\tau}(a)^{-1}\big)^{-1}=\sigma(a)(\sigma\ol{\tau}(a))^{-1}\ol{\tau}(a)a^{-1}
  =\ol{\tau}(a)(\sigma\ol{\tau}(a))^{-1}\sigma(a)a^{-1}=1\,.
  \]Comme $\sigma$ est un g\'en\'erateur de $\mathrm{Gal}(\ell/\ol{K})$, il s'ensuit que $a\ol{\tau}(a)^{-1}\in \ol{K}^*$. D'autre part, comme $K/F$ est non ramifi\'ee, $\ol{\tau}|_{\ol{K}}$ est l'unique \'el\'ement non trivial du groupe $\mathrm{Gal}(\ol{K}/\ol{F})$. Donc,
  \[
  N_{\ol{K}/\ol{F}}(a\ol{\tau}(a)^{-1})=a\ol{\tau}(a)^{-1}\cdot\ol{\tau}\big(a\ol{\tau}(a)^{-1}\big)=1\,.
  \]Cela implique que  $a\ol{\tau}(a)^{-1}=\alpha\ol{\tau}(\alpha)^{-1}$ pour un certain \'el\'ement $\alpha\in \ol{K}^*$. Alors, l'\'el\'ement $t:=a\alpha^{-1}\in\ell^*$ est fix\'e par $\ol{\tau}$, c'est-\`a-dire, $t\in \ell_0^*$. Ainsi on voit que  $a=\alpha t\in \ol{K}^*\ell_0^*$ comme desir\'e.
\end{proof}

Dans le contexte du \cref{3p6new}, Yanchevskii a \'egalement prouv\'e la trivialit\'e du groupe $\USK_1(D/F)$ lorsque $\ol{K}$ est un corps de nombres (\cite{Yanchevskii78}*{Cor.\;4.17}).
On g\'en\'eralise ce r\'esultat au th\'eor\`eme suivant.

\begin{thme}\label{2.3.7}
Soit $K$ un corps de valuation discr\`ete hens\'elien \`a corps r\'esiduel $\ol{K}$.
Supposons que $\car(\ol{K})=0$, $\mathrm{vcd}(\ol{K})\le 2$ et $\ol{K}$ n'a qu'un nombre fini d'ordres.

Alors on a $\USK_1(D/F)=1$ pour chaque $K$-alg\`ebre simple centrale \`a division $D$ qui a une $K/F$-involution unitaire.
\end{thme}
\begin{proof}[D\'emonstration]
On peut supposer que $\ol{K}$ est r\'eel et que $\deg(D)$ est une puissance de $2$,
car sinon le r\'esultat est imm\'ediat du \cref{3p6new}. Comme on a vu dans la \cref{remark on USK1 and SK1} (3)(iv), l'hypoth\`ese $\mathrm{vcd}(\ol{K})\le 2$ implique $\USK_1(\theta,\,\ol{D})=1$ pour toute involution unitaire $\theta$ sur $\ol{D}$. Vu le \cref{2.3.5}, on peut d'ailleurs supposer que $K/F$ est non ramifi\'ee, que $\ol{D}$ est non commutative et que $D/K$ est ramifi\'ee.

Prenons les notations du paragraphe \eqref{3p4new}. Par la vertu de la \cref{3p5new}, il suffit de prouver $\mathrm{PU}(\tau,\,D)=1$.

On met $\ell:=Z(\ol{D})$ et $\ell_0\ce \{a\in \ell\,|\,\ol{\tau}(a)=a\}$.
Si $\ell$ n'est pas r\'eel,
alors on a $\Nrd_{\ol{D}}(\ol{D}^*)=\ell^*$
et on peut raisonner comme dans la d\'emonstration du \cref{3p6new}.

Supposons $\ell$ r\'eel et notons $S_0$ l'ensemble de tous les ordres de $\ell$ dont la cl\^oture r\'eelle ne d\'eploie pas  $\ol{D}$.
Soit $S=\{ig\,|\,i\in S_0,\,g\in \mathrm{Gal}(\ell/\ol{K})\}$.
Comme dans la preuve du \cref{1.1.8}, on a
\[
\Nrd_{\ol{D}}(\ol{D}^*)=\ell_{S_0}^+=\ell^+_S=\{a\in \ell\,|\,i(a)>0 \text{ pour tout }\; i\in S\}\,.
\]
Soit $T_0$ l'ensemble des ordres de $\ell_0$ au dessous de $S$.

Nous savons d\'ej\`a que $\USK_1(\ol{\tau},\,\ol{D})=1$, d'o\`u $\Sigma_{\ol{\tau}}(\ol{D})=\Sigma'_{\ol{\tau}}(\ol{D})$.
Ainsi,
\[
\Nrd_{\ol{D}}\big(\Sigma_{\ol{\tau}}(\ol{D})\big)
=\Nrd_{\ol{D}}\big(\Sigma'_{\ol{\tau}}(\ol{D})\big)=(\ell_0)^+_{T_0}
=\{x\in\ell_0\,|\,i(x)>0 \text{ pour tout }\;i\in T_0\}\,.
\]

Pour prouver la trivialit\'e de $\mathrm{PU}(\tau,\,D)$,
on consid\`ere un \'el\'ement $a\in\ell^*$ tel que $\sigma(a)a^{-1}=b\ol{\tau}_2(b)^{-1}$ pour un certain $b\in\Nrd_{\ol{D}}(\ol{D}^*)$.
Il faut montrer $a\in \ol{K}^*\Nrd_{\ol{D}}\big(\Sigma_{\ol{\tau}}(\ol{D})\big)=\ol{K}^*(\ell_0)^+_{T_0}$.

Comme dans la preuve du \cref{3p6new}, on voit que $a\in \ol{K}^*\ell_0^*$. On peut donc supposer $a\in\ell_0^*$.

Maintenant, il suffit de trouver un \'el\'ement $c\in\ol{F}^*$ tel que $ca\in\Nrd_{\ol{D}}(\ol{D}^*)=\ell_S^+$.
En effet, cela implique que $ca\in\ell_S^+\cap \ell_0=(\ell_0)_{T_0}^+$,
 donc $a\in \ol{K}^*(\ell_0)^+_{T_0}=\ol{K}^*\Nrd_{\ol{D}}\big(\Sigma_{\ol{\tau}}(\ol{D})\big)$.

Pour voir l'existence de $c$,
soient $T$ l'ensemble des ordres de $\ol{F}$ au dessous de $T_0$ et
$T^{(i)}$ l'ensemble des ordres de $\ell_0$ au dessus de $i$ pour chaque $i\in T$.
On affirme que les \'el\'ements $j(a),\,j\in T^{(i)}$ ont tous le m\^eme signe.

Admettons cette affirmation pour l'instant. On peut alors d\'efinir le signe $\mathrm{sgn}_i(a)$ de $a$ en $i\in T$ comme le signe $\mathrm{sgn}(j(a))$ pour tout $j\in T^{(i)}$.
Puisque par l'hypoth\`ese $S$ est un ensemble fini,
l'ensemble $T$ est fini aussi.
Donc par l'approximation faible (\cite{Efrat2006book}*{Thm.\;10.1.7}),
il existe un \'el\'ement $c\in \ol{F}$ tel que $\mathrm{sgn}(i(c))=\mathrm{sgn}_i(a)$ pour tout $i\in T$.
Alors $ca\in\ell_S^+=\Nrd_{\ol{D}}(\ol{D}^*)$, comme d\'esir\'e.

Il reste \`a prouver notre affirmation concernant les signes des \'el\'ements $j(a)$, $j$ parcourant l'ensemble $T^{(i)}$ pour un $i\in T$ fix\'e.

Observons d'abord que les deux extensions (s\'eparables) $\ell_0/\ol{F}$ et $\ol{K}/\ol{F}$ sont disjointes, car $\ol{\tau}|_{\ol{K}}$ est non trivial. Donc tout $\ol{F}$-plongement de $\ell_0$ dans une cl\^oture alg\'ebrique de $\ol{F}$ s'\'etend en un $\ol{K}$-plongement de $\ell$. Soit  $\ol{F}_i$ la cl\^oture r\'eelle de $\ol{F}$ en $i$, et soit  $\ol{K}_i$ la cl\^oture r\'eelle de $\ol{K}$ en un ordre au dessus de $i$. Alors les \'el\'ements de $T^{(i)}$ correspondent aux $\ol{F}$-plongements de $\ell_0$ dans $\ol{F}_i$. On sait que tout  $\ol{F}$-plongement de $\ell_0$ dans $\ol{F}_i$ est la restriction d'un $\ol{K}$-plongement de $\ell$ dans $\ol{K}_i$. Donc, si $j\in T^{(i)}$ correspond au g\'en\'erateur $\sigma\in\mathrm{Gal}(\ell/\ol{K})$ (on rappelle que l'on a suppos\'e $\ell$ r\'eel), alors tout $j'\in T^{(i)}$ s'\'ecrit $j'=j\sigma^r$ pour un entier $r\ge 1$. Or, $\sigma(a)a^{-1}=b\ol{\tau}_2(b)^{-1}$ pour un $b\in\Nrd_{\ol{D}}(\ol{D}^*)$. En particulier, $\sigma(a)a^{-1}\in\Nrd_{\ol{D}}(\ol{D}^*)=\ell_S^+$. Cela donne $\sigma^2(a)\sigma(a)^{-1}=\sigma(\sigma (a)a^{-1})\in \ell_S^+$ et ainsi de suite. On en d\'eduit
\[
\sigma^r(a)a^{-1}=(\sigma^r(a)\sigma^{r-1}(a)^{-1})\cdot(\sigma^{r-1}(a)\sigma^{r-2}(a)^{-1})\cdots (\sigma(a)a^{-1})\in \ell_S^+\,,
\] d'o\`u
\[
j'(a)=j(\sigma^r(a))=j(\sigma^r(a)a^{-1})j(a)\;\in\;j(a)\ell_S^+\,.
\]Notre affirmation est ainsi d\'emontr\'ee. Ceci compl\`ete la d\'emonstration du th\'eor\`eme.
\end{proof}

\begin{rmke}
Soit $k$ un corps.
Alors $k$ ne poss\`ede qu'un nombre fini d'ordres s'il est un corps de normbres, ou si le sous-groupe des sommes non nulles de carr\'es dans $k^*$ est d'indice fini (\cite{Lam05}*{Chap.\;VIII, Exercise\;11}).
Par exemple, le corps $\mathbb{R}(\!(t_1)\!)\cdots(\!(t_n)\!)$ n'a qu'un nombre fini d'ordres pour chaque $n\in\mathbb{N}$
(voir aussi \cite{Lam05}*{Prop.\;VIII.4.11} pour une assertion plus pr\'ecise pour ce corps).

Par cons\'equent, dans le \cref{2.3.7} le corps r\'esiduel $\ol{K}$ peut \^etre $\mathbb{R}(\!(x)\!)$ ou $\mathbb{R}(\!(x)\!)(\!(y)\!)$.
\end{rmke}

\section{Propri\'et\'es d'approximation pour les groupes simplement connexes de type A}

Dans cette section, soit $C$ une courbe affine irr\'eductible normale  sur un corps $k$ et soit $K=k(C)$ le corps des fonctions de $C$.
Soit $\Omega$ l'ensemble des valuations discr\`etes de $K$ correspondant aux points ferm\'es de $C$.
Pour chaque $v\in\Omega$,
soit $K_v$ le compl\'et\'e de $K$ en $v$ et soit $\mathcal{O}_v$ l'anneau de valuation de $K_v$.
Si $D$ est une $K$-alg\`ebre,
on note $D_v\ce D\otimes_KK_v$ pour chaque $v\in \Omega$.

\begin{dfe}
Soit $X$ une vari\'et\'e alg\'ebrique sur $K$.
L'espace ad\'elique $X(\mathbf{A}_K)$ est l'espace produit restreint
des espaces topologiques $v$-adiques $X(K_v)$, $v\in\Omega$ par rapport aux
sous-espaces ouverts $X(\mathcal{O}_v)$ d\'efinis pour presque toute $v\in\Omega$.

On dit que $X$ satisfait \emph{l'approximation forte} (resp. \emph{l'approximation faible})
si $X(K)$ est dense dans l'espace ad\'elique $X(\mathbf{A}_K)$ (resp. dans l'espace produit $\prod_{v\in\Omega}X(K_v)$).
\end{dfe}

On peut utiliser la m\^eme m\'ethode de \cite{CTGP04Duke}*{Thm.\;4.7}
pour montrer que les groupes de type $\mathbf{A}$ satisfont l'approximation faible.

\begin{prop}\label{prop4.2}
Soit D une alg\`ebre simple centrale \`a division sur $K=k(C)$ et $n\ge 1$.
Supposons que $\SK_1(D_v)=1$ pour toute $v\in\Omega$.
Alors le groupe $G\ce \mathbf{SL}_n(D)$ satisfait l'approximation faible.
Si $n\ge 2$, alors $G$ satisfait l'approximation forte.
\end{prop}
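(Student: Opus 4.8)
The plan is to deduce both assertions from the local hypothesis $\SK_1(D_v)=1$ by the standard approximation machinery for simply connected groups. First I would set $A=M_n(D)$, so that $G=\mathbf{SL}_n(D)=\mathbf{SL}_{1,A}$, and record that $\SK_1(A_v)\cong\SK_1(D_v)=1$ for every $v\in\Omega$ by Morita invariance of the reduced Whitehead group (\cite{GSz17}*{Lemmas 2.10.5 and 2.10.8}). I would also note that $G$ is absolutely almost simple and simply connected of inner type $\mathrm{A}$, isotropic over $K$ and over each $K_v$ exactly when $n\ge 2$, whereas for $n=1$ it is anisotropic; this dichotomy is what forces the two parts to use different tools.

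For weak approximation I would argue uniformly in $n\ge 1$ by the method of \cite{CTGP04Duke}*{Thm.\;4.7}. The defect of weak approximation for the connected group $G$ with respect to a finite set $S\subseteq\Omega$ is governed by the groups of local $R$-equivalence classes: the closure of $G(K)$ in $\prod_{v\in S}G(K_v)$ contains the subgroup of local points that are $R$-equivalent to the identity, so that weak approximation holds once each $G(K_v)/R$ is trivial. For $G=\mathbf{SL}_{1,A}$ one has a natural isomorphism $G(K_v)/R\cong\SK_1(A_v)=\SK_1(D_v)$, which vanishes by hypothesis; hence $G(K_v)$ is $R$-trivial for all $v\in S$ and $G(K)$ is dense in $\prod_{v\in S}G(K_v)$.

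The substantial part is strong approximation for $n\ge 2$. Here I would work with the subgroup $G(K_v)^+\le G(K_v)$ generated by the $K_v$-points of the unipotent radicals of the $K_v$-rational parabolic subgroups. Since $n\ge 2$ the group $G$ is isotropic, and for groups of inner type $\mathrm{A}$ the Whitehead quotient is the reduced Whitehead group, $G(K_v)/G(K_v)^+\cong\SK_1(D_v)$; the hypothesis therefore yields $G(K_v)=G(K_v)^+$ for every $v\in\Omega$. On the other hand, as $C$ is affine the complement of $C$ in its smooth projective model is non-empty, and $G$ is isotropic at those places at infinity; this non-compactness is exactly the condition under which the elementary subgroup satisfies strong approximation, i.e.\ $G(K)^+$ is dense in the restricted product $G(\mathbf{A}_K)^+$ of the $G(K_v)^+$. (The latter reduces to strong approximation for the root subgroups $\cong\mathbb{A}^1$ with respect to $\Omega$, which holds because $C$ is affine.) Combining the two inputs,
\[
\ol{G(K)}\;\supseteq\;\ol{G(K)^+}\;\supseteq\;G(\mathbf{A}_K)^+\;=\;G(\mathbf{A}_K)\,,
\]
the last equality holding because $G(K_v)^+=G(K_v)$ for all $v$; this is the asserted density.

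The crux, and the only place where the hypothesis is genuinely used, is the identification of the local Whitehead (resp.\ $R$-equivalence) quotient with $\SK_1(D_v)$, together with the strong approximation theorem for the elementary subgroup. I would emphasise that one neither has nor needs the global vanishing of $\SK_1(D)$: the argument succeeds precisely because the density of $G(K)^+$ already forces the closure of $G(K)$ to contain each local factor $G(K_v)^+=G(K_v)$, so that the global Whitehead obstruction never enters.
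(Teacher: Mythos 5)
Your proof is correct, but it is not the argument the paper gives for this proposition; it is much closer to the paper's proof of the unitary analogue (\cref{prop4.6}). For weak approximation the paper, following the method of \cite{CTGP04Duke}*{Thm.\;4.7}, argues concretely: setting $B=\mathrm{M}_n(D)$, the hypothesis $\SK_1(B_v)=1$ makes each $g_v$ ($v\in S$) a product of a uniformly bounded number $m$ of commutators in $B_v^*$; since $B^*$ is Zariski-open in an affine space, $B^*$ is dense in $\prod_{v\in S}B_v^*$, and replacing the local entries by nearby global ones yields $g\in G(K)$ approximating $(g_v)$. Your reformulation via $R$-equivalence ($G(K_v)/R\cong\SK_1(D_v)$, Voskresenskii) is the same argument in substance, but one caveat: the principle ``the closure of $G(K)$ contains the locally $R$-trivial points'' is not a citable black box for an arbitrary connected group; what validates it here is precisely that the $R$-trivial elements of $G(K_v)$ are the products of commutators, i.e.\ lie in the image of a $K$-defined morphism from a $K$-rational variety, which is the commutator computation you would have to write out anyway. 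For strong approximation the two routes genuinely diverge: the paper reduces, by continuity of multiplication, to an adelic point equal to $I_n$ at every place of $S$ except one place $w$, uses $\SK_1(D_w)=1$ together with \cite{Draxl}*{\S\;20, Thm.\;3} to write $g_w$ as a product of elementary matrices $I_n+\lambda_w E_{ij}$, and concludes by strong approximation for the affine space $D$ (the approximation lemma for Dedekind rings, \cite{SerreCL}). You instead invoke $G(K_v)/G(K_v)^+\cong\SK_1(D_v)$ and the density of $G(K)^+$ in $G(\mathbf{A}_K)^+$, which is exactly \cite{Gille09KT}*{Lemme\;5.6} --- the mechanism the paper itself uses later for $\mathbf{SU}(D,h)$ in \cref{prop4.6} via Wall's theorem. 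So your route is certainly legitimate and generalizes immediately to other isotropic groups; what the paper's hands-on argument buys is self-containedness, needing nothing beyond Dieudonn\'e determinant theory and the Chinese remainder theorem. One minor inaccuracy in your preamble: for $n=1$ the group $\mathbf{SL}_1(D)$ is anisotropic over $K$, but not necessarily over each $K_v$, since $D_v=D\otimes_K K_v$ may acquire zero divisors; nothing in your argument uses this, however.
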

\begin{proof}[D\'emonstration]
Soient $S\subset \Omega$ un sous-ensemble fini, $B=\mathrm{M}_n(D)$ et $(g_v)\in \prod_{v\in S}G(K_v)=\prod_{v\in S}\mathrm{SL}_1(B_v)$.
Par hypoth\`ese, pour toute $v\in S$ on a $\SK_1(B_v)=1$,
donc chaque $g_v$ est un produit fini de commutateurs dans $B_v^*$.
Mais l'ensemble $S$ est fini,
alors on peut supposer que
\[
g_v=(a_{v,1}b_{v,1}a_{v,1}^{-1}b_{v,1}^{-1})\cdots (a_{v,m}b_{v,m}a_{v,m}^{-1}b_{v,m}^{-1})\quad \text{avec } a_{v,i},\,b_{v,i}\in B_v^* \text{ en toute } v\in S\,,
\] pour un entier $m\ge 1$ ind\'ependent de $v$.
L'ensemble $B^*$, vu comme  l'ensemble des $K$-points d'un ouvert de Zariski dans un espace affine sur $K$, est dense dans $\prod_{v\in S}B_v^*$. En choissant des \'el\'ements $a_i,b_i\in B^*$ assez proches de $a_{v,i},b_{v,i}$ en toute $v\in S$, on obtient un point $g=(a_1b_1a_1^{-1}b_1^{-1})\cdots (a_mb_ma_m^{-1}b_m^{-1})\in G(K)$, approchant de $g_v$ en toute $v\in S$.
% l'approximation faible est invariant sous les applications birationelles.
Cela montre que $G(K)$ est dense dans $\prod_{v\in S}G(K_v)$.

Pour l'approximation forte, on prend une famille $(g_v)\in G(\mathbf{A}_K)$ et un sous-ensemble fini (assez grand) $S\subset\Omega$.
%Par construction, on peut supposer que $(g_v)\in \prod_{v\in S}G(K_v)\times \prod_{v\notin S}G(\mathcal{O}_v)$
%pour certain sous-ensemble fini $S\subset \Omega$.
On cherche un point $g\in G(K)$ qui est suffisamment proche de $g_v$ en toute $v\in S$ et tel que $g\in G(\mathcal{O}_v)$ pour toute $v\notin S$. Comme la multiplication est continue,
on peut supposer que pour une certaine place $w\in S$, $g_v\in G(K_v)=\mathrm{SL}_1(\mathrm{M}_n(D_v))$ est la matrice identiti\'e $I_n$ pour toute $v\in S$ sauf $v=w$.
Pour $n\ge 2$, le groupe $G(K_w)=\mathrm{SL}_1(\mathrm{M}_n(D_w))$ est engendr\'e par
des matrices \'el\'ementaires de la forme $I_n+\lambda_w E_{ij}$ (\cite{Draxl}*{\S\;20, Thm.\;3}), o\`u $\lambda_w \in D_w$, $1\le i\neq j\le n$, et $E_{ij}$ d\'esigne la matrice
dont tous les coefficients sont nuls sauf que celui \`a la position $(i,j)$ vaut 1.
Encore par la continuit\'e de la multiplication,
on peut donc supposer $g_w=I_n+\lambda_wE_{ij}$ pour un certain $\lambda_w\in D_w$. En identifiant $D$ avec l'ensemble des $K$-points d'un espace affine sur $K$, on est ramen\'e \`a voir qu'un espace affine sur $K$ v\'erifie l'approximation forte. Cette derni\`ere assertion revient au lemme chinois pour  des anneaux de Dedekind (\cite{SerreCL}*{\S\,I.3, Approximation Lemma}).
%De cette fa\c{c}on, nous r\'eduisons le probl\`eme d'approximation forte \`a celui des anneaux de Dedekind (\cite{Cohn2012}*{Thm.\;10.5.10}).
%Dans \cite{CTGP04Duke}*{Thm.\;4.7} il a \'et\'e expliqu\'e que l'approximation faible pour $G$ r\'esulte de la trivialit\'e des groupes de Whitehead r\'eduits locaux.
%Si $n\ge 2$,
%alors le groupe $G(K_v)=\mathrm{SL}_1(\mathrm{M}_n(D_v))$ est engendr\'e par
%des matrices \'el\'ementaires \`a coefficients dans $D_v\ce D\otimes_KK_v$ (\cite{Draxl}*{\S\;20, Thm.\;3}).
%On peut alors prouver l'approximation forte par la m\'ethode de Eichler comme expliqu\'ee dans la preuve de \cite{Scharlau85}*{Thm.\;10.5.1}.
\end{proof}

La combinaison de la \cref{prop4.2} avec les r\'esultats discut\'es dans la section 2
donne plusieurs exemples o\`u $G=\mathbf{SL}_n(D)$ satisfait l'approximation faible ou forte.
En particulier, on obtient :

\begin{thme}\label{thm4.3}
Soit D une alg\`ebre simple centrale \`a division sur $K=k(C)$.
Supposons $n\ge 2$ et $\car(k)=0$.
Alors le groupe $G=\mathbf{SL}_n(D)$ satisfait l'approximation forte dans les deux cas suivants :

\benuma
\item
$\mathrm{vcd}(k)\le 2$.

\item $k=k_0(\!(t)\!)$ pour un corps $k_0$ tel que $\mathrm{vcd}(k_0)=2$, et $D$ est non ramifi\'ee sur $C$ (c'est-\`a-dire, non ramifi\'ee en toute $v\in\Omega$).
\end{enumerate}
\end{thme}
\begin{proof}[D\'emonstration]
Par la \cref{prop4.2}, il suffit de v\'erifier que $\SK_1(D_v)=1$ pour toute $v\in\Omega$.
Le premier cas est \'evident par le \cref{1.1.8},
et le second cas suit en combinant le \cref{1.1.8} avec la \cref{remark2.4} sur le cas non ramifi\'e.
\end{proof}

\begin{ege}
Le corps $k$ dans le premier cas du \cref{thm4.3} peut \^etre une extension finie de l'un des corps suivants:
$\mathbb{Q}\,,\,\mathbb{R}(t),\,\mathbb{R}(\!(t)\!),\,\mathbb{R}(x,\,y),\,\mathbb{R}(\!(x)\!)(y),\,\mathbb{R}(\!(x,\,y)\!)\,,\,\mathbb{Q}_p$, etc. Il en est de m\^eme du corps $k_0$ dans le deuxi\`eme cas du  \cref{thm4.3}.
\end{ege}

\begin{rmke}
On montre dans un autre article avec J.\,Liu \cite{HuLiuTian23} que pour une certaine alg\`ebre des quaternions $Q$,
le groupe $\mathbf{SL}_1(Q)$ ne satisfait pas l'approximation forte.
\end{rmke}

\begin{prop}\label{prop4.6}
Soit D une alg\`ebre simple centrale \`a division sur une extension s\'eparable quadratique $L$ de $K=k(C)$.
Soient $\sigma$ une $L/K$-involution unitaire sur $D$ et $(V,\,h)$ une forme hermitienne non d\'eg\'en\'er\'ee sur $(D,\,\sigma)$.
Supposons que $\USK_1(D_v/K_v)=1$ pour toute $v\in\Omega$.

Alors le groupe $G\ce \mathbf{SU}(D,\,h)$ satisfait l'approximation faible.
Si de plus $h$ est isotrope, alors $G$ satisfait l'approximation forte.
\end{prop}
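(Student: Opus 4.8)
The plan is to mirror the proof of \cref{prop4.2} step by step, with the reduced unitary Whitehead group $\USK_1$ playing the role of $\SK_1$. Set $B\ce\mathrm{End}_D(V)$, a central simple algebra over $L$ carrying the involution $\sigma_h$ adjoint to $h$; it is of the second kind with fixed field $K$ and is Brauer equivalent to $D$, so \cref{remark on USK1 and SK1}~(2) turns the hypothesis into $\USK_1(B_v/K_v)\cong\USK_1(D_v/K_v)=1$ for every $v\in\Omega$. Under $G=\mathbf{SU}(D,h)=\mathbf{SU}(\sigma_h,B)$, a point $g\in G(K_v)$ is an isometry with $\Nrd(g)=1\in K_v$, so it lies in $\Sigma'_{\sigma_h}(B_v)$; the structural input I will use, supplied by Yanchevskii's theory of reduced unitary $K$-theory, is that the vanishing of $\USK_1(B_v/K_v)$ forces $G(K_v)$ to be generated by commutators of the unitary similitude group $\mathbf{GU}(D,h)(K_v)$. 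This is the exact unitary counterpart of the identity $\mathrm{SL}_1(B_v)=[B_v^*,B_v^*]$ that drives \cref{prop4.2}.

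Granting this, weak approximation follows exactly as in \cref{prop4.2}. For a finite $S\subset\Omega$ and $(g_v)\in\prod_{v\in S}G(K_v)$, write each $g_v=\prod_{i=1}^{m}[u_{v,i},w_{v,i}]$ with $u_{v,i},w_{v,i}\in\mathbf{GU}(D,h)(K_v)$ and $m$ independent of $v$, which is possible since $S$ is finite. As the multiplier $\mathbf{GU}(D,h)\to\mathbf{G}_m$ and the reduced norm are homomorphisms, each commutator has trivial multiplier and trivial reduced norm, hence lies in $\mathbf{SU}(D,h)$; thus any product of such commutators is automatically a point of $G$. Since $\mathbf{GU}(D,h)$ is a $K$-rational group variety (for instance via the Cayley transform on $\mathbf{U}(D,h)$, whose variety is birational to the affine space of skew-$\sigma_h$-hermitian elements), it satisfies weak approximation, so the $u_{v,i},w_{v,i}$ can be approximated by global $u_i,w_i\in\mathbf{GU}(D,h)(K)$; then $g\ce\prod_{i=1}^{m}[u_i,w_i]\in G(K)$ is as close to $(g_v)$ as we wish. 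No isotropy hypothesis enters here, just as \cref{prop4.2} covers all $n\ge1$.

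For strong approximation I add the hypothesis that $h$ is isotropic, which makes $G$ isotropic over $K$ and lets me reproduce the $n\ge2$ half of \cref{prop4.2}. Over a completion $K_w$ the reduced unitary $K$-theory identifies the Kneser--Tits Whitehead group $W(K_w,G)$ with $\USK_1(D_w/K_w)=1$, so $G(K_w)=G(K_w)^{+}$ is generated by the $K_w$-points of the unipotent radicals of the proper $K_w$-parabolic subgroups, the analogue of generation by elementary matrices. Each such unipotent radical is a split unipotent $K_w$-group, hence isomorphic as a variety to an affine space. Following \cref{prop4.2}, I reduce by continuity of multiplication to approximating a single element lying in one such root group, taking $g_v=1$ at the other places of $S$ and integrality off $S$; the task becomes strong approximation for an affine space, i.e. the Chinese remainder theorem for the Dedekind ring of functions on $C$.

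The main obstacle is precisely the structural input: converting $\USK_1(D_v/K_v)=1$ into concrete generation of $G(K_v)$, by commutators of similitudes in general and by root-group unipotents in the isotropic case. These are the two pillars of \cref{prop4.2} (namely $\SK_1(B)=1\Leftrightarrow\mathrm{SL}_1(B)=[B^*,B^*]$, and elementary generation of $\mathrm{SL}_n(D)$ for $n\ge2$) transported to type $^2A$, and they rely on Yanchevskii's reduced unitary $K$-theory together with the Kneser--Tits theorem. Once these generation statements are secured, the density and approximation arguments are routine and identical in spirit to those of \cref{prop4.2}.
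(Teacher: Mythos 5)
The weak approximation half of your proof has a genuine gap, and it sits exactly where you flagged it: the ``structural input'' that $\USK_1(B_v/K_v)=1$ forces $G(K_v)=\mathbf{SU}(D,h)(K_v)$ to be generated by commutators of $\mathbf{GU}(D,h)(K_v)$. This is not the unitary analogue of $\SK_1(B)=1\Leftrightarrow \mathrm{SL}_1(B)=[B^*,B^*]$, and it is not supplied by Yanchevskii's theory. What the hypothesis gives, by the very definition $\USK_1(B_v/K_v)=\Sigma'_{\sigma_h}(B_v)/\Sigma_{\sigma_h}(B_v)$, is that every $g\in G(K_v)\subseteq \Sigma'_{\sigma_h}(B_v)$ is a product of $\sigma_h$-\emph{symmetric} elements of $B_v^*$. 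But symmetric elements are neither unitary nor similitudes, so the factors of this decomposition leave the group $G$: approximating them by global symmetric elements produces a point of $\Sigma_{\sigma_h}(B)$, not of $G(K)$, and there is no way to re-enter $G$. The only available result converting $\USK_1=1$ into a generation statement for the unitary group itself is Wall's theorem ($G(F)/G(F)^+\cong \USK_1(D/F)$, cf. \cite{Wall59}*{Thm.\;1}), and it requires $h$ to be isotropic, whereas weak approximation must be proved with no isotropy hypothesis. The paper circumvents precisely this difficulty by a different idea: by \cite{CM98}*{Prop.\;5.7}, $G$ is stably birational over $K$ to the group $H\ce\ker\big(R_{L|K}\GL_{1,D}\xrightarrow{\;x\mapsto \Nrd(x)/\sigma(\Nrd(x))\;} R^1_{L|K}\mathbb{G}_m\big)$, and weak approximation, being a stable birational invariant of connected groups, may be checked on $H$ instead; for $H$ the argument of \cref{prop4.2} does work, because $H(K_v)=\Sigma'_\sigma(D_v)=\Sigma_\sigma(D_v)$ and any product of (approximating, global) symmetric elements of $D^*$ automatically lies in $H(K)$, its reduced norm being $\sigma$-invariant. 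Without this reduction -- or a proof of your commutator-generation claim, which I do not believe exists in the literature for anisotropic $h$ -- the weak approximation part does not go through. (A secondary gap in the same part: even granting your claim, you would need weak approximation for $\mathbf{GU}(D,h)$; the Cayley transform gives rationality of $\mathbf{U}(D,h)$, not of $\mathbf{GU}(D,h)$, which is a quotient of $R_{L|K}\mathbb{G}_m\times\mathbf{U}(D,h)$ by a norm-one torus.)

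The strong approximation half is essentially correct and coincides with the paper's: isotropy of $h$ makes $G$ isotropic, Wall's theorem gives $G(K_v)/G(K_v)^+\cong\USK_1(D_v/K_v)=1$, and from $G(K_v)=G(K_v)^+$ for all $v\in\Omega$ the paper simply cites \cite{Gille09KT}*{Lemme\;5.6(2)}, whose proof is the root-group/Chinese-remainder argument you sketch. If you insist on running that argument by hand, take the opposite parabolic subgroups and their unipotent radicals defined over $K$ (they exist by Borel--Tits since $G$ is $K$-isotropic), not merely over $K_w$: the root groups must be affine spaces over $K$ for the approximation lemma on the Dedekind ring of $C$ to apply.
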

\begin{proof}[D\'emonstration]
%Pour l'approximation faible,
%on peut utiliser le m\^eme preuve que \cite{CTGP04Duke}*{Thm.\;4.7}.
%Quand $h$ est isotrope, on peut d\'emontrer l'approximation forte pour $G$ en suivant la m\'ethode de \cite{Kneser65-SA}*{\S\;5}
%(c'est-\`a-dire que l'approximation forte est une cons\'equence de l'approximation faible et de l'existence d'\'el\'ements unipotents); ou simplement en appliquant \cite{Gille09KT}*{Lemme\;5.6}.
Pour l'approximation faible,
consid\'erons la suite exacte
\[
1\to H\to R_{L|K}\GL_{1,D}\overset{\alpha}{\longrightarrow} R^1_{L|K}\mathbb{G}_m \to 1,
\]
o\`u la  fl\`eche  $\alpha$ est donn\'ee par $x\mapsto \Nrd(x)/\sigma(\Nrd(x))$.
Selon \cite{CM98}*{Prop.\;5.7}, le groupe $H$ est stablement birationnel \`a $G$ sur $K$.
Il suffit de montrer que $H$ satisfait l'approximation faible.
Par la construction de $H$, on a $H(K)=\{x\in D^*\,|\, \sigma(\Nrd(x))=\Nrd(x)\}$.
On note
\[
\Sigma\ce \langle x\in D^*\,|\, \sigma(x)=x\rangle\subset H(K)
\quad\text{ et }\quad
\Sigma_v\ce \langle x\in D_v^*\,|\, \sigma(x)=x\rangle.
\]
% et de la m\^eme mani\`ere pour $\Sigma_v\subset H(K_v)$.
Par ailleurs, l'hypoth\`ese $\USK_1(D_v/K_v)=1$ implique $H(K_v)=\Sigma_v$. Soit $S\subset\Omega$ un sous-ensemble fini, et soit $(h_v)\in \prod_{v\in S}H(K_v)$. On peut choisir un entier $m\ge 1$, ind\'ependent de $v\in S$, tel que  $h_v\in H(K_v)$ est un produit de pr\'ecis\'ement $m$ \'el\'ements $\sigma$-invariants de $D_v^*$ pour toute $v\in S$. Utilisant un argument similaire \`a celui dans la preuve de la \cref{prop4.2},  de la densit\'e de $D^*$ dans $\prod_{v\in S}D_v^*$ on peut d\'eduire que  $H(K)$ est dense dans $\prod_{v\in S}H(K_v)$.

Pour l'approximation  forte, l'hypoth\`ese que $h$ est isotrope signifie que le groupe $G$ est isotrope. Pour chaque $v\in \Omega$, on note
$G(K_v)^+$  le sous-groupe de $G(K_v)$ engendr\'e par les \'el\'ements unipotents. D'apr\`es \cite{Wall59}*{Thm.\;1}, on a $G(K_v)/G(K_v)^+\simeq \USK_1(D_v/K_v)$. Donc, par hypoth\`ese on a  $G(K_v)^+=G(K_v)$ pour chaque $v\in \Omega$. Par cons\'equent,
le groupe $G$ satisfait l'approximation forte par \cite{Gille09KT}*{Lemme\;5.6(2)}.
\end{proof}

\begin{thme}\label{thm4.7}
Soit $D$ une alg\`ebre simple centrale \`a division sur une extension s\'eparable quadratique $L$ de $K=k(C)$.
Soient $\sigma$ une $L/K$-involution unitaire sur $D$ et $(V,\,h)$ une forme hermitienne non d\'eg\'en\'er\'ee sur $(D,\,\sigma)$.
Supposons que $\car(k)=0$ et que $h$ est isotrope.

Alors le groupe $G=\mathbf{SU}(D,\,h)$ satisfait l'approximation forte dans chacun des cas suivants :

\benuma
\item\label{thm4.7.1}
$\mathrm{ind}(D)$ est sans facteurs carr\'es.

\item\label{thm4.7.2}
$\mathrm{vcd}(k)\le 1$.

\item\label{thm4.7.3}
$K$ n'est pas r\'eel et $\mathrm{vcd}(k)=2$.

\item\label{thm4.7.4}$K$ n'est pas r\'eel, $k=k_0(\!(t)\!)$ pour un corps $k_0$ tel que $\mathrm{vcd}(k_0)=2$,
et $D$ est non ramifi\'ee sur $C$ (c'est-\`a-dire, pour toute $v\in\Omega$,
$D_v\ce D\otimes_KK_v$ est non ramifi\'ee sur $K_v$).

\item\label{thm4.7.5}
$K$ est r\'eel et $k=\mathbb{R}(\!(x)\!)(\!(y)\!)$.
\end{enumerate}
\end{thme}
\begin{proof}[D\'emonstration]
D'apr\`es la \cref{prop4.6},
il suffit de v\'erifier que les groupes de Whitehead unitaires r\'eduits locaux sont triviaux.
Les cas \ref{thm4.7.1} et \ref{thm4.7.2} sont des cons\'equences de la \cref{remark on USK1 and SK1} (3).
Pour les cas \ref{thm4.7.3} et \ref{thm4.7.4}, on peut utiliser le \cref{2.3.5}.
Le cas \ref{thm4.7.5} est valable en vertu du \cref{2.3.7}.
\end{proof}

\begin{ege}
Le corps $k$ dans le cas \ref{thm4.7.3} du \cref{thm4.7} peut \^etre une extension finie de l'un des corps :  $\mathbb{Q}\,,\,\mathbb{R}(t),\,\mathbb{R}(\!(t)\!),\,\mathbb{R}(x,\,y),\,\mathbb{R}(\!(x)\!)(y),\,\mathbb{R}(\!(x,\,y)\!)\,,\,\mathbb{Q}_p$, etc. Il en est de m\^eme du corps $k_0$ dans le cas \ref{thm4.7.4} du  \cref{thm4.7}.
\end{ege}

\begin{rmke}
Soit $G$ un groupe simplement connexe absolument presque simple sur $K$.
Rappelons que $G$ satisfait l'approximation forte s'il est isotrope et $K$-rationnel, par \cite{Gille09KT}*{Cor.\;5.11}.

Supposons que $K=k(C)$ n'est pas r\'eel.
Si $k$ est une extension finie de $\mathbb{R}(\!(t)\!)$,
on peut utiliser \cite{CTGP04Duke}*{Thm.\;4.3} pour voir que $G$ est isotrope et $K$-rationnel s'il n'est pas de type A.
Il en va de m\^eme si $k$ est une extension finie de $\mathbb{R}(t)$ et $G$ n'est pas de type A ou $E_8$.
(Ici, on utilise \cite{Benoist19PIHES}*{Thm.\;0.12} pour v\'erifier la condition  p\'eriode-indice dans \cite{CTGP04Duke}*{Thm.\;4.3}.)
Pour ces deux types de corps $k$, les Th\'eor\`emes \ref{thm4.3} et \ref{thm4.7} couvrent le cas o\`u $G$ est isotrope de type A.

Lorsque $k$ est un corps $p$-adique, le groupe $G$ est $K$-rationnel s'il est isotrope et de type B, C, ou D classique (cf. \cite{Gille09KT}*{Thm.\;6.1}). Pour le type D classique, notons que toute alg\`ebre \`a division centrale sur $K$ admettant une involution orthogonale est de degr\'e $\le 4$, d'apr\`es un th\'eor\`eme de Saltman (\cite{Saltman97}) (voir \cite{PS14invent} dans le cas $p=2$).
\end{rmke}

\begin{rmke}
  Dans les Propositions\;\ref{prop4.2} et \ref{prop4.6}, l'assertion sur l'approximation faible a \'et\'e remarqu\'ee ind\'ependamment par V.\,Suresh.
Notons aussi que pour l'approximation faible on peut agrandir l'ensemble $\Omega$ en y rajoutant toutes les valuations discr\`etes $v$ (non seulement celles provenant de la courbe $C$) telles que les groups $\SK_1$ ou $\USK_1$ en $v$ sont triviaux.

Par exemple, si $k=\mathbb{Q}_p$ et $p\nmid \deg(D)$, on peut mettre dans $\Omega$ les valuations de $K$ correspondant aux points de codimension 1 des mod\`eles r\'eguliers de $C$ sur $\mathbb{Z}_p$.
\end{rmke}

\noindent \emph{Remerciements.}
Nous remercions Ting-Yu Lee pour des discussions utiles, et les rapporteurs pour des commentaires qui nous aident \`a am\'eliorer l'article.
Le premier auteur a b\'en\'efici\'e d'une subvention de la Guangdong Basic and Applied Basic Research Foundation
(no.\,2021A1515010396).

\begin{bibsection}[R\'ef\'erences]
\begin{biblist}
\bib{BFP98}{article}{
  author={Bayer-Fluckiger, Eva},
  author={Parimala, Raman},
  title={Classical groups and the Hasse principle},
  journal={Ann. of Math. (2)},
  volume={147},
  date={1998},
  number={3},
  pages={651--693},
  issn={0003-486X},
  review={\MR {1637659}},
  doi={10.2307/120961},
}

\bib{Benoist19PIHES}{article}{
  author={Benoist, Olivier},
  title={The period-index problem for real surfaces},
  journal={Publ. Math. Inst. Hautes \'{E}tudes Sci.},
  volume={130},
  date={2019},
  pages={63--110},
  issn={0073-8301},
  review={\MR {4028514}},
  doi={10.1007/s10240-019-00108-7},
}

\bib{CM98}{article}{
   author={Chernousov, Vladimir},
   author={Merkurjev, Alexander},
   title={$R$-equivalence and special unitary groups},
   journal={J. Algebra},
   volume={209},
   date={1998},
   number={1},
   pages={175--198},
   issn={0021-8693},
   review={\MR{1652122}},
   doi={10.1006/jabr.1998.7534},
}

\bib{Cohn2012}{book}{
  author={Cohn, Paul},
  title={Basic algebra: groups, rings and fields},
  year={2012},
  publisher={Springer Science \& Business Media},
}

\bib{Colliot96Ktheory}{article}{
  author={Colliot-Th\'{e}l\`ene, Jean-Louis},
  title={Quelques r\'{e}sultats de finitude pour le groupe $SK_1$ d'une alg\`ebre de biquaternions},
  language={French, with English summary},
  journal={$K$-Theory},
  volume={10},
  date={1996},
  number={1},
  pages={31--48},
  issn={0920-3036},
  review={\MR {1373817}},
  doi={10.1007/BF00534887},
}

\bib{CTGP04Duke}{article}{
  author={Colliot-Th\'{e}l\`ene, Jean-Louis},
  author={Gille, Philippe},
  author={Parimala, Raman},
  title={Arithmetic of linear algebraic groups over 2-dimensional geometric fields},
  journal={Duke Math. J.},
  volume={121},
  date={2004},
  number={2},
  pages={285--341},
  issn={0012-7094},
  review={\MR {2034644}},
  doi={10.1215/S0012-7094-04-12124-4},
}

\bib{Draxl}{book}{
  author={Draxl, Peter},
  title={Skew fields},
  series={London Mathematical Society Lecture Note Series},
  volume={81},
  publisher={Cambridge University Press, Cambridge},
  date={1983},
  pages={ix+182},
  isbn={0-521-27274-2},
  review={\MR {696937}},
  doi={10.1017/CBO9780511661907},
}

\bib{Efrat2006book}{book}{
  author={Efrat, Ido},
  title={Valuations, orderings, and Milnor $K$-theory},
  series={Mathematical Surveys and Monographs},
  volume={124},
  publisher={American Mathematical Society, Providence, RI},
  date={2006},
  pages={xiv+288},
  isbn={0-8218-4041-X},
  review={\MR {2215492}},
  doi={10.1090/surv/124},
}

\bib{Ershov82MathSb}{article}{
  author={Ershov, Yurii Leonidovich},
  title={Henselian valuations of division rings and the group $SK_{1}$},
  language={Russian},
  journal={Mat. Sb. (N.S.)},
  volume={117(159)},
  date={1982},
  number={1},
  pages={60--68},
  issn={0368-8666},
  review={\MR {642489}},
}

\bib{Gille00}{article}{
  author={Gille, Philippe},
  title={Invariants cohomologiques de Rost en caract\'{e}ristique positive},
  language={French, with English and French summaries},
  journal={$K$-Theory},
  volume={21},
  date={2000},
  number={1},
  pages={57--100},
  issn={0920-3036},
  review={\MR {1802626}},
  doi={10.1023/A:1007839108933},
}

\bib{Gille09KT}{article}{
  author={Gille, Philippe},
  title={Le probl\`eme de Kneser--Tits},
  language={French, with French summary},
  note={S\'{e}minaire Bourbaki. Vol. 2007/2008},
  journal={Ast\'{e}risque},
  number={326},
  date={2009},
  pages={Exp. No. 983, vii, 39--81 (2010)},
  issn={0303-1179},
  isbn={978-285629-269-3},
  review={\MR {2605318}},
}

\bib{GSz17}{book}{
  author={Gille, Philippe},
  author={Szamuely, Tam\'{a}s},
  title={Central simple algebras and Galois cohomology},
  series={Cambridge Studies in Advanced Mathematics},
  volume={165},
  note={Second edition of [MR2266528]},
  publisher={Cambridge University Press, Cambridge},
  date={2017},
  pages={xi+417},
  isbn={978-1-316-60988-0},
  isbn={978-1-107-15637-1},
  review={\MR {3727161}},
}

\bib{Gille19book}{book}{
  author={Gille, Philippe},
  title={Groupes alg\'{e}briques semi-simples en dimension cohomologique $\le $ 2},
  language={French},
  series={Lecture Notes in Mathematics},
  volume={2238},
  note={With a comprehensive introduction in English},
  publisher={Springer, Cham},
  date={2019},
  pages={xxii+167},
  isbn={978-3-030-17271-8},
  isbn={978-3-030-17272-5},
  review={\MR {3972198}},
  doi={10.1007/978-3-030-17272-5},
}

\bib{GSz06}{book}{
   author={Gille, Philippe},
   author={Szamuely, Tam\'{a}s},
   title={Central simple algebras and Galois cohomology},
   series={Cambridge Studies in Advanced Mathematics},
   volume={101},
   publisher={Cambridge University Press, Cambridge},
   date={2006},
   pages={xii+343},
   isbn={978-0-521-86103-8},
   isbn={0-521-86103-9},
   review={\MR{2266528}},
   doi={10.1017/CBO9780511607219},
}

\bib{GB07}{article}{
  author={Grenier-Boley, Nicolas},
  title={On the triviality of certain Whitehead groups},
  journal={Math. Proc. R. Ir. Acad.},
  volume={107},
  date={2007},
  number={2},
  pages={183--193},
  issn={1393-7197},
  review={\MR {2358018}},
  doi={10.3318/PRIA.2007.107.2.183},
}

\bib{Hu2017IMRN}{article}{
  author={Hu, Yong},
  title={A cohomological Hasse principle over two-dimensional local rings},
  journal={Int. Math. Res. Not. IMRN},
  date={2017},
  number={14},
  pages={4369--4397},
  issn={1073-7928},
  review={\MR {3674174}},
  doi={10.1093/imrn/rnw149},
}

\bib{HuLiuTian23}{article}{
  author={Hu, Yong},
  author={Liu, Jing},
  author={Tian, Yisheng},
  title={Strong approximation and {H}asse principle for integral quadratic forms over affine curves},
  date={2023},
  note={arXiv:2312.08849},
 % label={HT23},
}

\bib{JW90}{article}{
  author={Jacob, Bill},
  author={Wadsworth, Adrian},
  title={Division algebras over Henselian fields},
  journal={J. Algebra},
  volume={128},
  date={1990},
  number={1},
  pages={126--179},
  issn={0021-8693},
  review={\MR {1031915}},
  doi={10.1016/0021-8693(90)90047-R},
}

\bib{Kneser65-SA}{article}{
  author={Kneser, Martin},
  title={Starke Approximation in algebraischen Gruppen. I},
  language={German},
  journal={J. Reine Angew. Math.},
  volume={218},
  date={1965},
  pages={190--203},
  issn={0075-4102},
  review={\MR {0184945}},
  doi={10.1515/crll.1965.218.190},
}

\bib{KMRT98}{book}{
  author={Knus, Max-Albert},
  author={Merkurjev, Alexander},
  author={Rost, Markus},
  author={Tignol, Jean-Pierre},
  title={The book of involutions},
  series={American Mathematical Society Colloquium Publications},
  volume={44},
  note={With a preface in French by J. Tits},
  publisher={American Mathematical Society, Providence, RI},
  date={1998},
  pages={xxii+593},
  isbn={0-8218-0904-0},
  review={\MR {1632779}},
  doi={10.1090/coll/044},
}

\bib{Lam05}{book}{
  author={Lam, Tsit Yuen},
  title={Introduction to quadratic forms over fields},
  series={Graduate Studies in Mathematics},
  volume={67},
  publisher={American Mathematical Society, Providence, RI},
  date={2005},
  pages={xxii+550},
  isbn={0-8218-1095-2},
  review={\MR {2104929}},
}

\bib{Lipnitskii82}{article}{
  author={Lipnitski\u {\i }, Valery Antonovich},
  title={Norms in fields of real algebraic functions and the reduced Whitehead group},
  language={Russian, with English summary},
  journal={Dokl. Akad. Nauk BSSR},
  volume={26},
  date={1982},
  number={7},
  pages={585--588, 668},
  issn={0002-354X},
  review={\MR {672002}},
}

\bib{Merkurjev95ProcSympos58}{article}{
  author={Merkurjev, Alexander},
  title={Certain $K$-cohomology groups of Severi-Brauer varieties},
  conference={ title={$K$-theory and algebraic geometry: connections with quadratic forms and division algebras}, address={Santa Barbara, CA}, date={1992}, },
  book={ series={Proc. Sympos. Pure Math.}, volume={58}, publisher={Amer. Math. Soc., Providence, RI}, },
  date={1995},
  pages={319--331},
  review={\MR {1327307}},
  doi={10.1016/j.jpaa.2010.02.006},
}

\bib{Merkurjev99crelle}{article}{
  author={Merkurjev, Alexander},
  title={Invariants of algebraic groups},
  journal={J. Reine Angew. Math.},
  volume={508},
  date={1999},
  pages={127--156},
  issn={0075-4102},
  review={\MR {1676873}},
  doi={10.1515/crll.1999.022},
}

\bib{MerkurjevSuslin82English}{article}{
  author={Merkurjev, Alexander},
  author={Suslin, Andrei},
  title={$K$-cohomology of Severi-Brauer varieties and the norm residue homomorphism},
  language={English},
  journal={Izv. Akad. Nauk SSSR Ser. Mat.},
  volume={46},
  date={1982},
  number={5},
  pages={1011--1046, 1135--1136},
  issn={0373-2436},
  review={\MR {675529}},
}

\bib{PS14invent}{article}{
    AUTHOR = {Parimala, Raman},
    author={Suresh, Venapally},
     TITLE = {Period-index and u-invariant questions for function fields
              over complete discretely valued fields},
   JOURNAL = {Invent. Math.},
    VOLUME = {197},
      YEAR = {2014},
    NUMBER = {1},
     PAGES = {215--235},
      ISSN = {0020-9910},
  review={\MR {3219517}},
       DOI = {10.1007/s00222-013-0483-y},
       URL = {http://dx.doi.org/10.1007/s00222-013-0483-y},
}

\bib{PlatonovYanchevskii73}{article}{
  author={Platonov, Vladimir},
  author={Yanchevski\u {\i }, Vyacheslav},
  title={The structure of unitary groups and of the commutants of a simple algebra over global fields},
  language={Russian},
  journal={Dokl. Akad. Nauk SSSR},
  volume={208},
  date={1973},
  pages={541--544},
  issn={0002-3264},
  review={\MR {0320175}},
}

\bib{Saltman97}{article}{
    AUTHOR = {Saltman, David},
     TITLE = {Division algebras over {$p$}-adic curves},
   JOURNAL = {J. Ramanujan Math. Soc.},
    VOLUME = {12},
      YEAR = {1997},
    NUMBER = {1},
     PAGES = {25--47},
      ISSN = {0970-1249},
   review={\MR {1462850}},
}

\bib{Scharlau85}{book}{
  author={Scharlau, Winfried},
  title={Quadratic and Hermitian forms},
  series={Grundlehren der Mathematischen Wissenschaften [Fundamental Principles of Mathematical Sciences]},
  volume={270},
  publisher={Springer-Verlag, Berlin},
  date={1985},
  pages={x+421},
  isbn={3-540-13724-6},
  review={\MR {770063}},
  doi={10.1007/978-3-642-69971-9},
}

\bib{SerreCL}{book}{
   author={Serre, Jean-Pierre},
   title={Local fields},
   series={Graduate Texts in Mathematics},
   volume={67},
   note={Translated from the French by Marvin Jay Greenberg},
   publisher={Springer-Verlag, New York-Berlin},
   date={1979},
   pages={viii+241},
   isbn={0-387-90424-7},
   review={\MR{554237}},
}

\bib{SerreCG94}{book}{
  author={Serre, Jean-Pierre},
  title={Cohomologie galoisienne},
  language={French},
  series={Lecture Notes in Mathematics},
  volume={5},
  edition={5},
  publisher={Springer-Verlag, Berlin},
  date={1994},
  pages={x+181},
  isbn={3-540-58002-6},
  review={\MR {1324577}},
  doi={10.1007/BFb0108758},
}

\bib{Soman19}{article}{
  author={Soman, Abhay},
  title={On triviality of the reduced Whitehead group over Henselian fields},
  journal={Arch. Math. (Basel)},
  volume={113},
  date={2019},
  number={3},
  pages={237--245},
  issn={0003-889X},
  review={\MR {3988818}},
  doi={10.1007/s00013-019-01344-3},
}

\bib{Suslin06SK1}{article}{
  author={Suslin, Andrei},
  title={$SK_1$ of division algebras and Galois cohomology revisited},
  conference={ title={Proceedings of the St. Petersburg Mathematical Society. Vol. XII}, },
  book={ series={Amer. Math. Soc. Transl. Ser. 2}, volume={219}, publisher={Amer. Math. Soc., Providence, RI}, },
  date={2006},
  pages={125--147},
  review={\MR {2276854}},
  doi={10.1090/trans2/219/04},
}

\bib{TignolWadsworth15}{book}{
  author={Tignol, Jean-Pierre},
  author={Wadsworth, Adrian R.},
  title={Value functions on simple algebras, and associated graded rings},
  series={Springer Monographs in Mathematics},
  publisher={Springer, Cham},
  date={2015},
  pages={xvi+643},
  isbn={978-3-319-16359-8},
  isbn={978-3-319-16360-4},
  review={\MR {3328410}},
}

\bib{Wall59}{article}{
    AUTHOR = {Wall, Gordon E.},
     TITLE = {The structure of a unitary factor group},
   JOURNAL = {Inst. Hautes \'{E}tudes Sci. Publ. Math.},
  FJOURNAL = {Institut des Hautes \'{E}tudes Scientifiques. Publications
              Math\'{e}matiques},
    NUMBER = {1},
      YEAR = {1959},
     PAGES = {7--23},
      ISSN = {0073-8301},
   MRCLASS = {22.00 (20.00)},
  MRNUMBER = {104764},
MRREVIEWER = {R. Ree},
 review={\MR {0104764}},
     %  URL = {http://www.numdam.org/item?id=PMIHES_1959__1__7_0},
}

\bib{WangShianghaw50AJM}{article}{
  author={Wang, Shianghaw},
  title={On the commutator group of a simple algebra},
  journal={Amer. J. Math.},
  volume={72},
  date={1950},
  pages={323--334},
  issn={0002-9327},
  review={\MR {34380}},
  doi={10.2307/2372036},
}

\bib{Yanchevskii74MatSb}{article}{
  author={Yanchevski\u {\i }, Vyacheslav},
  title={Simple algebras with involutions, and unitary groups},
  language={Russian},
  journal={Mat. Sb. (N.S.)},
  volume={93 (135)},
  date={1974},
  pages={368--380, 487},
  review={\MR {0439819}},
}

\bib{Yanchevskii78}{article}{
  author={Yanchevski\u {\i }, Vyacheslav},
  title={Reduced unitary $K$-theory and division algebras over Henselian discretely valued fields},
  language={Russian},
  journal={Izv. Akad. Nauk SSSR Ser. Mat.},
  volume={42},
  date={1978},
  number={4},
  pages={879--918},
  issn={0373-2436},
  review={\MR {508832}},
}

\bib{Yanchevskii04Whitehead}{article}{
  author={Yanchevski\u {\i }, Vyacheslav},
  title={Whitehead groups and groups of $R$-equivalence classes of linear algebraic groups of non-commutative classical type over some virtual fields},
  conference={ title={Algebraic groups and arithmetic}, },
  book={ publisher={Tata Inst. Fund. Res., Mumbai}, },
  date={2004},
  pages={491--505},
  review={\MR {2094122}},
}

\end{biblist}
\end{bibsection}

\

Yong HU

\medskip

Department of Mathematics

Southern University of Science and Technology

%No. 1088, Xueyuan Blvd., Nanshan district

Shenzhen 518055, China

 %, Guangdong,China

Email: huy@sustech.edu.cn

\

Yisheng TIAN

\medskip

Institute for Advanced Study in Mathematics, Harbin Institute of Technology

Harbin 150001, China

Email: tysmath@mail.ustc.edu.cn

\end{document}